\theoremstyle{plain}
\newtheorem{thm}{Theorem}[section]
\newtheorem{lemma}[thm]{Lemma}
\theoremstyle{definition}
\newtheorem{remark}[thm]{Remark}
\newtheorem{defin}[thm]{Definition}
\newtheorem{example}[thm]{Example}
\numberwithin{equation}{section}
\newcommand{\sK}{{\mathcal K}}
\newcommand{\sL}{{\mathcal L}}
\newcommand{\PP}{\ensuremath{\mathbb{P}}}
\newcommand\LL{{\mathbb L}}
\newcommand{\CC}{\ensuremath{\mathbb{C}}}
\newcommand{\RR}{\ensuremath{\mathbb{R}}}
\newcommand{\ZZ}{\ensuremath{\mathbb{Z}}}
\newcommand{\hol}{\ensuremath{\mathcal{O}}}
\newenvironment{dedication}
        {\begin{quotation}\begin{center}\begin{em}}
        {\par\end{em}\end{center}\end{quotation}}
\newcommand\la{\lambda}
\newcommand\s{\sigma}
\newcommand\al{\alpha}
\newcommand\be{\beta}
\newcommand\ga{\gamma}
\newcommand\de{\delta}
\newcommand\e{\epsilon}
\newcommand{\ra}{\ensuremath{\rightarrow}}
\def\eea{\end{eqnarray*}}
\def\bea{\begin{eqnarray*}}
\newcommand\dual{\mathrel{\raise3pt\hbox{$\underline{\mathrm{\thinspace d
\thinspace}}$}}}
\newcommand\qe{\ifhmode\unskip\nobreak\fi\quad $\Box$}       
\def\BOX{\hfill\lower.5\baselineskip\hbox{$\Box$}}
\theoremstyle{plain}
\newtheorem{theo}[thm]{Theorem}
\newtheorem{remarkk}[thm]{Remark}
\newenvironment{rem}{\begin{remarkk}\rm}{\end{remarkk}}
\newtheorem{prop}[thm] {Proposition}
\tikzset{%
  show curve controls/.style={
    postaction={
      decoration={
        show path construction,
        curveto code={
          \draw [blue] 
            (\tikzinputsegmentfirst) -- (\tikzinputsegmentsupporta)
            (\tikzinputsegmentlast) -- (\tikzinputsegmentsupportb);
          \fill [red, opacity=0.5] 
            (\tikzinputsegmentsupporta) circle [radius=.5ex]
            (\tikzinputsegmentsupportb) circle [radius=.5ex];
        }
      },
      decorate
}}}
\title [Real Abelian coverings]{ Cyclic and Abelian coverings of real varieties}
\author{Fabrizio Catanese}
\address{Lehrstuhl Mathematik VIII, 
 Mathematisches Institut der Universit\"{a}t
Bayreuth, NW II\\ Universit\"{a}tsstr. 30,
95447 Bayreuth, Germany \\ and Korea Institute for Advanced Study, Hoegiro 87, Seoul, 
133--722.}
\email{Fabrizio.Catanese@uni-bayreuth.de}
\author{ Michael L\"onne}
\address {Lehrstuhl Mathematik VIII\\
Mathematisches Institut der Universit\"at Bayreuth\\
NW II,  Universit\"atsstr. 30\\
95447 Bayreuth}
\email{Michael.Loenne@uni-bayreuth.de \newline \hspace*{2.2cm} }
\author{Fabio Perroni}
\address{Dipartimento di Matematica e Geoscienze, Universit\'a degli Studi di Trieste, via Valerio 12/1, 34127 Trieste, Italy}
\email{fperroni@units.it \newline \hspace*{2.2cm} }
\thanks{AMS Classification: 14P99, 14P25, 14H30, 30F50, 12D99. \\
Keywords: Abelian coverings, Real varieties.\\
The present work took place in the framework of the 
 ERC Advanced grant n. 340258, `TADMICAMT'. Very preliminary  results were announced at the Conference 
 `Real Algebraic Geometry', Universit\'e de Rennes 1, June 2011. }
\date{\today}
\begin{document}

\maketitle

\begin{dedication}
Dedicated to Slava (Viatcheslav) Kharlamov  on the occasion of his 71-st  
 birthday.
\end{dedication}

\begin{abstract}
We describe the  birational and the biregular  theory of cyclic and Abelian coverings between  real varieties.

\end{abstract}

\tableofcontents

\section*{Introduction}

While real solutions of polynomial equations were ever since investigated (for instance Newton classified all the
possible equations and shapes of real plane cubic curves) a breakthrough came  alongside of  the impetuous 
 development  of complex function theory.

Harnack, Klein and Weichold, just to name a few, \cite{harnack}, \cite{klein},  \cite{klein1},  \cite{klein3},
 \cite{weichold} used the ideas of Riemann in order to study real equations and their real solutions.
 
 In this way the main branch of real algebraic geometry was born, the one which focuses on the 
 pair of sets given by the complex solutions and the real solutions, with complex conjugation $\s$ acting on them.
 
 The abstract formal outcome of this approach is the definition of a real manifold as a pair  $(X, \s)$ where  $X$ is a complex 
 manifold  and  $\s : X \ra X$   is   an antiholomorphic involution (involution means that  $\s^2$ is the identity).
 The substantial outcome was the use of topological methods and of methods from complex manifolds theory for 
  real algebraic geometry.
 
 A strong boost towards the development of real algebraic geometry came from the Hilbert problems 16 and 17, 
 posed in Paris in the year 1900.
 
 Especially problem 16 oriented the research towards the extrinsic geometry: real plane curves were the object of intensive
 investigations, and, later on,  also surfaces of low degree   in $\PP^3$  (\cite{slava1,slava2}).
 
 The Russian school was very influenced by Hilbert's problem 16, with  an even more prominent  role of topology in the study of this extrinsic geometry of real varieties (see \cite{kharlamov} for an excellent survey): but soon emerged the role
 of intrinsic geometry.
 
 These methods  had been  pioneered by Comessatti, who studied the topology of rational surfaces \cite{comessatti0},\cite{comessatti1},
 and later on the real structures on Abelian varieties  \cite{comessatti2}, complementing previous  results of Klein and
 others on real algebraic curves  (see \cite{cilped} for an excellent survey).
 
 An important endeavour was the extension of the Castelnuovo-Enriques classification of surfaces to the case
  of real algebraic surfaces, with initial contributions in \cite{Manin66}, \cite{Manin67}, 
\cite{Iskovskih67}, \cite{Iskovskih70}, \cite{Iskovskih79}, \cite{Nikulin79},  \cite{silhol}, and then achieved  through  a long series of papers among which
  \cite{RealEnriques}, \cite{CatFred}, \cite{frediani}, \cite{Elliptic} (we refer to \cite{mangolte}, \cite{mangolte2} as an excellent
  text and for more references of many other works by Mangolte and others).

  This said, the contribution of the present paper concerns the intrinsic real algebraic geometry 
  and deals with Abelian coverings between real algebraic varieties $(X, \s)$ and $(X', \s')$ (i.e., Galois coverings such that the Galois group of $ X \ra X'$ is an Abelian group $G$ normalized by $\s$).
  The complex case was initiated by Comessatti \cite{comessatti} who described  the cyclic case 
  and, after many intermediate works by several authors, the biregular theory
  of such coverings, in the case where $X'$ is smooth, was established  in \cite{pardini}. 
  
  The present paper is divided into two parts: the first one is devoted to the birational theory, i.e., the description 
  of the corresponding fields extensions $\RR(X') \subset \RR(X)$, the second is the biregular theory for $X'$ factorial,
  where the covering is described  according to the scheme of  Pardini's paper through building data, which are divisors on $X'$ and line bundles on $X'$  satisfying some compatibility relations, and where we have to add the reality constraints.

  We should warn the reader that the birational theory, which is well-known and almost trivial in the complex case
  (where the cyclic case is the one where the field extension $\CC(X') \subset \CC(X)$
  is obtained by taking the $n$-th root $z$ of a function $f \in \CC(X')$, hence it is  described by the simple formula $z^n = f$),
  is by no means easy in the real case.

  This is the reason why the  title distinguishes between the cyclic and the Abelian case: 
  while we can describe the cyclic case as the fibre product 
  (field compositum) of four basic fields extensions, this becomes considerably more complicated in the Abelian case.
  The underlying reason is a simple arithmetical fact: the group $G'$ generated by the Galois group $G$ and by
  the antiholomorphic involution $\s$ is a semidirect product $G' = G \rtimes (\ZZ/2)$, classified by an automorphism $M : G \ra G$
  such that $M^2=1$ ($1$ is here  the identity of $G$).
   We dedicate Section 2 to the analysis of the algebraic structure of the group $G'$.
  
  In the cyclic case $M \in ( \ZZ/n)^*$ can be described through a splitting of the cyclic group $G$ as 
  a direct sum such that, on each summand  $(\ZZ/h)$, $M$ acts either as multiplication by $\pm 1$  or, possibly,  if $h$ is divisible by $8$,
  by $\pm 1 + \frac{h}{2}$.

  In the non cyclic case there is no such similar  splitting and already the description of such pairs $(G,M)$
  is slightly more complicated (see Lemma \ref{M}).
  
   Section 3 is dedicated to the description of the corresponding fields extensions.

 The easiest case where the field extension is given by $z^n = f \in \RR(X')$ is the one where the group $G'$ is 
 isomorphic to the dihedral group $D_n$  (and the field extension $\RR(X') \subset \RR(X)$ is not Galois). The other cases become progressively more complicated,  and the main result
 of Section 3, and of the first part,  is Theorem \ref{field}, describing the four basic cases. 
 Since the statement of the theorem requires a complicated notation, we do not follow  the usual 
 order of exposition: instead, while preparing the necessary definitions,  we provide at the same time the proof.  
 
Section 4 is devoted to the description of the  biregular theory of real Abelian coverings in terms of 
branch divisors and  invertible character sheaves, and the results are spelled out in particular in Theorem
\ref{cyclic} (recalling the cyclic case in the complex setting), in Theorem \ref{real-cyclic}, stating the result
in the real cyclic case, while for the general Abelian case, to avoid too much repetition, we just indicate 
how to get the result from Theorem \ref{real-cyclic} mutatis mutandis.

   \bigskip
   
  Passing to the biregular theory  of more general Galois coverings, with non Abelian Galois group $G$,
   we should recall the real analogue of the complex Riemann existence theorem 
  (in the general version given by Grauert and Remmert \cite{G-R}).
  
  It is based on the notion of the real fundamental group of $(X, \s)$, denoted  $\pi_1^{\RR}(X, \s)$, which,
  in the case where $X$ has  a real point $x_0 \in X(\RR)$,
 is the semidirect product  
  $\pi_1(X, x_0) \rtimes (\ZZ/2)$, where conjugation by $(\ZZ/2)$ is given by $\s_*$;   else, it is defined as the fundamental group of the Klein variety $\sK(X) : = X / \s$ (see \cite{CatFred}).

 In both cases we have an exact sequence:
 $$ 1 \ra  \pi_1(X) \ra \pi_1^{\RR}(X) \ra  \ZZ/2 \ra 1,$$
 which splits in the first case.

 Putting together definitions and  results of \cite{G-R} and of \cite{CatFred} we obtain the following theorem,
whose general statement might  be new.
 
\begin{theo}\label{RRiemann} (Real Riemann existence theorem)
A real Galois covering with Galois group $G$ between normal real varieties $(X, \s)$ and $(X', \s')$
is determined by  a $\s'$-invariant Zariski closed subset $B$ of $X'$ and a surjective homomorphism 
$$
\Psi : \pi_1^{\RR}(X' \setminus B, \s') \ra G' \, ,
$$
where $G' = G \rtimes (\ZZ/2)$  is determined by an automorphism $M \in Aut(G)$ of order $2$ (i.e. $M^2=1$),
such that the following conditions are satisfied:  
\begin{itemize}
\item[i)]  
the action of $G'$ on $X$, which is determined by $\Psi$, is such that its restriction to 
$\ZZ/2 < G'$   coincides with the action associated to the real structure $\sigma$ on $X$;
\item[ii)] 
$ \Psi (\pi_1(X' \setminus B)) = G$.
\end{itemize}  
 \end{theo}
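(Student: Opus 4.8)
The plan is to deduce the real statement from the complex Riemann existence theorem of \cite{G-R}, together with the structural description of $\pi_1^{\RR}$ recalled above, by establishing the asserted correspondence in two directions. First I would reduce to the unramified situation: setting $U' := X' \setminus B$ and $U := p^{-1}(U')$, where $p \colon X \to X'$ is the covering, normality of $X$ and $X'$ means that $X \to X'$ is recovered from the \'etale Galois covering $U \to U'$ by taking the normalization of $X'$ in the function field of $U$. Hence it suffices to set up the correspondence between real \'etale Galois coverings of $(U', \sigma')$ and the homomorphisms $\Psi$, and then to extend the data across $B$. Observe that $B$ is $\sigma'$-invariant if and only if $\sigma$ carries $U$ to itself while normalizing $G$, which is precisely the hypothesis that lets one form $G' = G \rtimes (\ZZ/2)$ with $M \in \Aut(G)$, $M^2 = 1$, given by conjugation by $\sigma$, as in Lemma \ref{M}.

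For the forward direction I would start from the complex monodromy: by \cite{G-R} the connected Galois covering $U \to U'$ with group $G$ is classified by a surjection $\phi \colon \pi_1(U') \to G$ whose kernel is the image of $\pi_1(U)$. The essential compatibility is that, since $\sigma$ lies over $\sigma'$ and conjugates $G$ by $M$, one has $\phi \circ \sigma'_* = M \circ \phi$ on $\pi_1(U')$ (up to routine base-point and inversion bookkeeping coming from the antiholomorphicity of $\sigma$). I would then extend $\phi$ to $\Psi \colon \pi_1^{\RR}(U') \to G'$ by sending a chosen lift of the generator of $\ZZ/2$ to the element $g\,\sigma \in G'$ realizing the action of $\sigma$ over the chosen base lift; the displayed relation guarantees that $\Psi$ respects the two semidirect-product structures and is a well-defined homomorphism, and conditions i) and ii) then hold by construction. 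When $U'$ has a real point this uses the splitting of the exact sequence for $\pi_1^{\RR}$; when it has none, one instead works with the Klein variety $\sK(U') = U'/\sigma'$, whose fundamental group is $\pi_1^{\RR}(U')$, and lifts $\phi$ along the non-split extension.

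For the converse, given $(B, \Psi)$ I would restrict to $\phi := \Psi|_{\pi_1(U')} \colon \pi_1(U') \to G$, which is surjective by ii), apply \cite{G-R} to manufacture the \'etale Galois covering $U \to U'$ and then $X \to X'$ by normalization, and use the value of $\Psi$ on the nontrivial coset to equip $U$, hence $X$, with an antiholomorphic map $\sigma$ lying over $\sigma'$: concretely $\sigma$ is the lift of $\sigma'$ twisted by the deck transformation $\Psi(\tilde\sigma')$. That $\sigma^2 = \id$ reduces to the relations in $G'$ encoded by $M^2 = 1$, and condition i) records that this reconstructed $\sigma$ is the prescribed real structure. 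Finally I would verify that the two constructions are mutually inverse, the homomorphism $\Psi$ being determined only up to conjugation by $G$, corresponding to the choice of base-point lift.

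I expect the main obstacle to lie in the extension step of the forward direction, and dually in the involutivity check of the converse, precisely when $(U', \sigma')$ has no real points: there the sequence $1 \to \pi_1(U') \to \pi_1^{\RR}(U') \to \ZZ/2 \to 1$ need not split, so lifting $\phi$ to $\Psi$ and guaranteeing $\sigma^2 = \id$ is not automatic and must be matched against the algebraic constraints on $(G, M)$ from Lemma \ref{M}. This is exactly where the arithmetic subtleties flagged in the introduction, such as the possible $\pm 1 + h/2$ behaviour, enter, and where care with $2$-torsion is required.
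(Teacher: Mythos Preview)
The paper does not actually supply a proof of Theorem~\ref{RRiemann}: it is stated in the Introduction with the sentence ``Putting together definitions and results of \cite{G-R} and of \cite{CatFred} we obtain the following theorem, whose general statement might be new'', and the text moves on immediately afterwards. So there is no argument in the paper to compare yours against; what can be compared is whether your sketch realises the combination of \cite{G-R} and \cite{CatFred} that the authors have in mind.

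Your outline does realise this, and the two-direction scheme (reduce to the \'etale locus, invoke \cite{G-R} for the complex monodromy $\phi$, then upgrade to $\Psi$ using the $\pi_1^{\RR}$ formalism of \cite{CatFred}; conversely rebuild $X$ from $\phi=\Psi|_{\pi_1}$ and read off $\sigma$ from the nontrivial coset) is correct. Two small corrections: the appeal to Lemma~\ref{M} is misplaced, since that lemma classifies the possible $(G,M)$ and plays no role in constructing $\Psi$; and the sentence ``$B$ is $\sigma'$-invariant if and only if $\sigma$ carries $U$ to itself while normalizing $G$'' conflates two separate hypotheses (the normalizing condition is part of the definition of a real Galois cover and is independent of $B$).

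Where your write-up goes astray is the final paragraph. The difficulty you flag in the non-split case is real, but it has nothing to do with the $\pm 1 + h/2$ phenomenon or with $2$-torsion in $G$: those enter only in Sections~2--3 when one tries to classify the field extensions explicitly, not in the Riemann existence statement, which is uniform in $(G,M)$. The clean way to dispose of the non-split case, and the one implicit in the reference to \cite{CatFred}, is to bypass the lifting problem entirely: work directly on the Klein variety $\sK(U')=U'/\sigma'$, whose (orbifold) fundamental group is by definition $\pi_1^{\RR}(U',\sigma')$, and observe that $\ker\Psi\subset\pi_1(U')$ (since $\Psi$ is compatible with the two projections to $\ZZ/2$), so the $G'$-Galois cover of $\sK(U')$ determined by $\Psi$ is exactly $U$. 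The $\ZZ/2<G'$ then acts on $U$ as an honest involution because it has order $2$ in $G'$, and it is antiholomorphic because it covers $\sigma'$; no separate involutivity check or $2$-torsion bookkeeping is needed.
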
 
 
 In the case where $X'$ is smooth, there is a minimal $B$ which is a divisor. The analysis of  its components
 and of  the local monodromies in the Abelian case leads to the biregular theory of such coverings,
 to which section $4$ is devoted.
 
 In general, the Riemann existence theorem works fine with complex curves, where the fundamental groups
  $\pi_1(X' \setminus B)$ are well known. 
  
In a sequel to this paper we shall treat the case of real curves, which  goes back to the Klein theory 
(revived and extended for instance in \cite{a-g}, and many other papers by Sepp\"ala and other authors, 
for instance  \cite{seppala}): here many arguments, except the final theorems, work for any finite group $G$.  
We shall discuss  such coverings in terms of certain numerical
and topological data, extending  results of \cite{cyclic} in the complex case  (this was partly done in \cite{nankai} and \cite{cime}), and with a view to the study 
of the corresponding moduli spaces of real curves with Abelian symmetry group
$G$  and with a fixed set of  invariants ($M$, local monodromies, ...).

\bigskip

\section{The basic set-up}

In this paper, we consider the following situation:

1) $(X, \s)$ is a real projective variety (this  means that $X$ is a complex projective variety given together with an antiholomorphic
self map $\s : X \ra X$ which is an involution, i.e.  $\s^2$ is the identity);

2) $G $ is a finite subgroup of the group of complex automorphisms of $X$, and $G$ is normalized by $\s$.
If we consider the quotient $X' : = X / G$, since $\s G = G \s$, we see 
 that $\s$ induces an antiholomorphic involution $\s'$ on $X'$, defined by 
 $$\s' (Gx) : = \s (Gx) = G \s x.$$
 
 Hence, 
in particular, $(X', \s')$ is also a real projective variety.
\smallskip

 We can of course consider the more general situation where $X$ is a real space (replace the condition that
$X$ is a projective variety by the condition that $X$ is a complex space). 

\begin{defin}
Saying that  $ f : (X, \s) \ra (X', \s')$  is real $G$-Galois covering means that $ G \subset Aut(X)$ as in 2) above,
$X' \cong X/G$ via $f$, and $f$ is real, that is, $f \circ \s = \s' \circ f$.

In this situation $\s$ normalizes $G$, and we have a semidirect product 
$$ G' : = G \rtimes \ZZ/2, \ \ZZ/2 \cong \{ 1, \s\}.$$
\end{defin}

If the finite group $G$ is Abelian, respectively cyclic,  we shall say that  $X \ra X'$ is an Abelian,
respectively  cyclic, covering of real varieties, with group $G$.

For a projective variety $X \subset \PP^N (\CC)$ defined by polynomial equations with real coefficients, 
$\s$ is induced by complex conjugation 
$$\s (x_0, \dots, x_N) : = (\overline{x_0}, \dots, \overline{x_N}).$$
It is important to observe that any real projective variety admits such an embedding,  see for
instance  \cite{mangolte}, Th\'eor\`eme 2.6.44.

More generally, if $X$ is a complex variety, one says that $\s$ is antiholomorphic if it is differentiable
and  locally induced by an antiholomorphic map
of complex manifolds, i.e., such that the derivative $D \s$ of $\s$ satisfies 
$$ J \circ  D \s = -  D \s  \circ J,$$
where $J$ is the complex structure on the (Zariski) tangent space of $X$ ($J^2 = -1$).

The group  $G$ acts on the function field $\CC(X)$, and the antiholomorphic map $\s$ induces also an involution
$\tau : \CC(X) \ra \CC(X)$, which normalizes the action of $G$  on $\CC(X)$,  given by $ g (f) : = f \circ g^{-1}$.

It is easy to understand $\tau$ in the prototype case where $X$ is a projective variety $X \subset \PP^N (\CC)$
defined by polynomial equations with real coefficients:
$\tau$ is induced by the map that acts  on homogeneous polynomials $  P (x_0, \dots, x_N) = \sum_I a_I x^I$
just by conjugating their coefficients.

I.e., $\tau (P) : = \sum_I \overline{a_I } x^I$; and the field of rational functions which are $\tau$-invariant is just the field
$\RR(X)$, such that $\CC(X) = \RR(X) \otimes_{\RR} \CC$.

 In this representation as a tensor product, $\tau$ is induced by complex conjugation
on $\CC$.

In the general case, since $$\overline { P (\s (x))} = \overline { \sum_I a_I \overline{x^I}} = \sum_I \overline{a_I } x^I,$$
one defines 
$$\tau (f) : = \overline { f \circ \s}.$$

We shall work, at least in the first part of the paper, with the group $G'$ generated by $G$ and $\tau$ acting on the function field
$\CC(X)$. The same calculation can be performed in the general case working with line bundles instead 
of function fields.

 In the case where the group $G$ is cyclic, $G \cong \ZZ/n$, since the field $\CC$ contains all roots of unity, the field extension 
$\CC(X) \supset \CC(X')$
has a simple description, as $$ \CC(X) = \CC(X')[z]/ (z^n - f).$$

But in this representation we do not see the action of $\tau$. Indeed, in the next section, we show that, through the fibre product of
two distinct coverings of $X'$, we can reduce to four basic cases:

\begin{enumerate}
\item
$G'$ is the dihedral group $D_n$: this is the standard  totally real covering case where $$ \RR(X) = \RR(X')[z]/ (z^n - f).$$
\item
$G'$ is a direct product $(\ZZ/n) \times (\ZZ/2)$: this is a case which resembles the one of the complex dihedral coverings 
(see \cite{cat-per}), so we call it the dihedral-like case.

  It is important to observe that this is exactly the case where
the field extension $ \RR(X) \supset  \RR(X')$ is Galois (with group $G$):  since the subgroup $\ZZ/2$ is a normal subgroup of $G'$
if and only if $G ' \cong G \times \ZZ/2$.
\item
$n$ is divisible by $8$ and $G'$ is the semidirect product such that 
$$ \tau g \tau = g^{1 + n/2} :$$ this will be called  the twisted case,  and it can be reduced to the dihedral-like case.
\item
$n$ is divisible by $8$ and $G'$ is the semidirect product such that 
$$ \tau g \tau = g^{-1 + n/2} :$$ this will be called the esoteric covering \footnote{It appears esoteric only because we wish to find explicit algebraic formulae for the field extension, instead of using the real version, Theorem \ref{RRiemann}, of  Riemann's  existence theorem of \cite{G-R}}.

\end{enumerate}

\section{A numerical lemma}

In  the first part of  this section we consider the following situation: we are given a finite group $G'$, which is a semidirect product
$$ G ' : = (\ZZ/n) \rtimes  (\ZZ/2) = : G  \rtimes  (\ZZ/2) \ , G : = \langle g |g^n = 1\rangle, \ (\ZZ/2) : = \langle \tau  |\tau^2 = 1\rangle.$$

The semidirect product is then classified by an element $ m \in (\ZZ/n)^*$, such that $m^2 = 1$  and such that 
$\tau g \tau = g^m$.

The next lemma shows that, up to  letting  $G$  be a direct product of two cyclic groups (of coprime order), we can reduce to four basic situations: 

$m= \pm 1$, 
or, when  $n$ is divisible by $8$, $ m=\pm 1 + n/2$.

\begin{lemma}\label{squareroot}
Let $ m \in (\ZZ/n)^*$ be such that $m^2 = 1$. Then we can write $n = n_1 \cdot n_2 \cdot 2^k$, where:
\begin{enumerate}
\item
 $k=0$ or $k\geq 3$, 
 \item
 the factors are pairwise relatively prime (and possibly equal to $1$),
 \item
  using the Chinese remainder theorem to identify
$$\ZZ/n \cong  \ZZ/n_1 \times \ZZ/n_2 \times \ZZ/2^k,$$
$m$ corresponds to $(1, - 1, \pm 1 + 2^{k-1})$. 
\end{enumerate}
In particular, we can write $n = N_1 \cdot N_2$ with $N_1$, $N_2$ relatively prime (and possibly equal to $1$), and either
\begin{itemize}
\item[(4)] 
$ m \in  \ZZ/N_1 \times \ZZ/N_2$ equals $(1, -1)$, or 
\item[(5)] 
$N_2$ is divisible by $8$ and $ m \in  \ZZ/N_1 \times \ZZ/N_2$ equals $(1, -1 + N_2/2)$, or 
\item[(6)]
$N_2$ is divisible by $8$ and $ m \in  \ZZ/N_1 \times \ZZ/N_2$ equals $(-1, 1 + N_2/2)$. 
\end{itemize}

\end{lemma}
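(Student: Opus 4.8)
The plan is to reduce everything to the prime factorization of $n$ via the Chinese remainder theorem together with the known structure of the unit groups $(\ZZ/p^a)^*$. Writing $n = 2^j \prod_i p_i^{a_i}$ with the $p_i$ odd, the isomorphism $(\ZZ/n)^* \cong (\ZZ/2^j)^* \times \prod_i (\ZZ/p_i^{a_i})^*$ turns the single relation $m^2 = 1$ into the system of independent congruences $m^2 \equiv 1$ modulo each prime power. So it suffices to determine the square roots of $1$ in each factor separately and then repackage them into the three groups $n_1, n_2, 2^k$. Pairwise coprimality (condition (2)) is then automatic, since each prime ends up in exactly one of the three factors.

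For the odd prime powers this is immediate: $(\ZZ/p_i^{a_i})^*$ is cyclic, hence contains exactly two elements of order dividing $2$, namely $\pm 1$. I would therefore let $n_1$ be the product of those $p_i^{a_i}$ for which $m \equiv 1$ and $n_2$ the product of those for which $m \equiv -1$, so that by construction $m \equiv 1 \pmod{n_1}$ and $m \equiv -1 \pmod{n_2}$.

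The crux is the $2$-primary factor, and here I would split into the cases $j \le 2$ and $j \ge 3$. For $j \le 2$ the group $(\ZZ/2^j)^*$ is again cyclic (trivial, or of order $2$), so $m \equiv \pm 1 \pmod{2^j}$ and the factor $2^j$ can simply be absorbed into $n_1$ or $n_2$; this is exactly why one may always take $k=0$ in these cases, which is what forces condition (1). For $j \ge 3$ the hard part is the elementary but essential count that the square roots of $1$ modulo $2^j$ are precisely the four residues $1, -1, 2^{j-1}+1, 2^{j-1}-1$. I would prove this by rewriting $x^2 \equiv 1$ as $2^j \mid (x-1)(x+1)$; since $x$ is odd, $x-1$ and $x+1$ are consecutive even integers, one of them divisible by exactly $2$, forcing $2^{j-1}$ to divide the other, i.e. $x \equiv \pm 1 \pmod{2^{j-1}}$, and these two congruences produce exactly the four residues modulo $2^j$. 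If the $2$-component of $m$ is $\pm 1$, I again absorb $2^j$ into $n_1$ or $n_2$; only when it is one of the genuinely twisted roots $\pm 1 + 2^{j-1}$ do I set $k = j \ge 3$, which yields (3) with the sign matching the twisted root.

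Finally, for the ``in particular'' statement I would merge the twist factor $2^k$ with one of the two odd groups. If $k = 0$ I take $N_1 = n_1$, $N_2 = n_2$ and land in case (4). If $k \ge 3$ and the $2$-component of $m$ is $-1 + 2^{k-1}$, setting $N_1 = n_1$ and $N_2 = n_2 \cdot 2^k$ gives $N_2/2 = n_2 2^{k-1}$, whence, using that $n_2$ is odd so $n_2 2^{k-1} \equiv 2^{k-1} \pmod{2^k}$, one checks $m \equiv -1 + N_2/2$ modulo both $n_2$ and $2^k$, i.e. case (5); symmetrically, the twisted root $1 + 2^{k-1}$ is folded into the $n_1$ side via $N_1 = n_2$, $N_2 = n_1 \cdot 2^k$, giving $m \equiv (-1, 1 + N_2/2)$, which is case (6). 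The only genuine obstacle in the whole argument is the square-root count modulo $2^j$; all the rest is Chinese-remainder bookkeeping.
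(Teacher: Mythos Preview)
Your proposal is correct and follows essentially the same architecture as the paper's proof: reduce via the Chinese remainder theorem to prime-power moduli, determine the square roots of $1$ in each factor, and then regroup the primes into $n_1$, $n_2$, and possibly $2^k$. The only differences are local and stylistic: for odd prime powers the paper carries out a direct computation (writing $m=\e+ap^h$ and squaring) rather than invoking the cyclicity of $(\ZZ/p^a)^*$, and for the $2$-primary part it uses the same squaring trick instead of your $(x-1)(x+1)$ divisibility argument; the merging step for (4)--(6) is verbatim the same as yours.
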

\begin{proof}

Let us first consider the case where $n$ is a prime power and  show that (the case $n=2$ being trivial):
\begin{itemize}
 
\item[(i)]
if $n = p^e$,  where $p$ is an odd prime, then $m= \pm 1$;
\item[(ii)]
if $n = 2^e$ and $e \geq 2$,  then either $m= \pm 1$ or,   in case  $ e \geq 3$, 
we can also have  $ m=\pm 1 + n/2$.
\end{itemize}
Indeed in both cases, (i) and (ii), $m \equiv \e  \ ( mod \ p)$, where $\e = \pm 1$. 
In case (i), let $p^h$ the biggest power of $p$ that divides $m - \e$ ($h \geq 1$). 
Then, $1= m^2= ( \e + a p^h)^2 = 1 + 2 \e a p^h + a^2 p^{2h}$, hence $h =e$ since $p > 2$.

In case (ii), it follows that $h \geq e-1$, hence either $h=e$ and $ m = \e$, or $h=e-1$ and $m = \e + 2^{e-1}$.

In the general case where $n>1$ is any natural number, it suffices to use the primary decomposition of $\ZZ/n$
and put together all the primary summands where $m$ equals respectively $1$, $-1$, $ \pm 1 + 2^{k-1}$.
Here $k=e$, if $n=n_1 \cdot n_2 \cdot 2^e$ with $e\geq 3$ and the summand of $m$ belonging to 
$\ZZ/2^e$ is equal to $\pm 1 + 2^{e-1}$ (notice that, if $e=2$, $\pm 1 +2 = \mp 1$). This shows (1), (2) and (3).

Finally, to prove (4), (5) and (6) observe that, 
if the third summand is nontrivial and $m= ( 1, -1, 1 + 2^{k-1})$, then, setting $N_2 : = n_1 \cdot 2^k$, 
we have that  $ 1 + n_1 \cdot 2^{k-1}$ is congruent to $1$ modulo $n_1$
and congruent to $ 1 +  2^{k-1}$ modulo  $2^k$. The other case where $m= ( 1, -1, -1 + 2^{k-1})$,
that is,  $ m = -1 + n_2 2^{k-1}$, 
is entirely similar.

\end{proof}

 The previous lemma says therefore that there are seven possibilities:  
three contemplated in (4) (namely $N_1=1$, or $N_2=1$, or $N_1 \neq 1 \neq N_2$)  and respectively two for both 
(5) and (6) (namely, $N_1 =1$, or $N_1 \neq 1$, since here we must have $N_2 \neq 1$). 
Of these  four are `pure' (unmixed cases), and three are direct products of two pure cases.

The way the lemma shall be applied is through the following well-known proposition; when stating our results we shall sometimes
take this condition for granted (it distinguishes between irreducible and reducible covers).

\begin{prop}\label{nonpower}
Let $K$ be a field of characteristic zero and containing all roots of $1$. 

Then $ L :=  K [z] / (z^n -f) $ is a field if and only if there is no divisor $h$ of $n$, with $ h \geq 2$, and no element $a$ of $K$ such 
that $ a^h = f$ (equivalently, the same condition with $h$ a prime number).

Assume now that $ L :=  K [z] / (z^n -f) $ and $ L' :=  K [x] / (x^m -\phi ) $ are fields, and that $n,m$ are relatively prime.
Then $L \otimes_K L'$ is a field extension of $K$, Galois with Galois group cyclic of order $nm$.
\end{prop}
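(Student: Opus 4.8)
The plan is to prove the two assertions of Proposition \ref{nonpower} separately, both relying on the hypothesis that $K$ contains all roots of unity (in particular the $n$-th roots, which are distinct since $\mathrm{char}\, K = 0$), so that $z^n - f$ is separable and $L = K[z]/(z^n-f)$ is a Kummer extension.

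\emph{Irreducibility criterion.} First I would recall the classical Kummer-theoretic fact: for $K$ containing the $n$-th roots of unity, $z^n - f$ is irreducible over $K$ if and only if $f$ is not an $h$-th power in $K$ for any prime $h \mid n$, and moreover (when $4 \mid n$) one must rule out $f \in -4K^4$; but in characteristic zero with all roots of unity present the latter obstruction is subsumed, so the clean statement is that $z^n-f$ is irreducible iff $f \notin K^h$ for every prime $h\mid n$. To prove the stated equivalence I would argue the contrapositive in both directions. If $a^h = f$ with $h \geq 2$ a divisor of $n$, write $z^n - f = (z^{n/h})^h - a^h$; since $K$ contains a primitive $h$-th root of unity $\zeta$, this factors as $\prod_{j=0}^{h-1}(z^{n/h} - \zeta^j a)$, so $z^n-f$ is reducible and $L$ is not a field. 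Conversely, if $z^n-f$ is reducible, let $\beta$ be a root in a splitting field; all roots are $\zeta^i\beta$ with $\zeta$ a primitive $n$-th root of unity, so the constant term of any proper monic factor of degree $d$, $1\le d < n$, equals $\pm\zeta^s\beta^d$ and lies in $K$. Hence $\beta^d \in K$; combining $\beta^d\in K$ with $\beta^n = f\in K$ and $\gcd(d,n) = e < n$, one gets $\beta^e \in K$, and then $f = (\beta^e)^{n/e}$ exhibits $f$ as an $(n/e)$-th power. Choosing any prime $h$ dividing $n/e$ yields an $h$-th power, establishing the equivalence (including the ``equivalently, $h$ prime'' parenthetical).

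\emph{The tensor product.} For the second claim, assume $L = K[z]/(z^n-f)$ and $L' = K[x]/(x^m - \phi)$ are fields with $\gcd(n,m)=1$. Both are Galois over $K$ (Kummer extensions with roots of unity in $K$), cyclic of orders $n$ and $m$ respectively; indeed fixing a primitive $n$-th root $\zeta_n$, the generator $z\mapsto \zeta_n z$ generates $\mathrm{Gal}(L/K)\cong \ZZ/n$, and similarly for $L'$. The degrees $[L:K]=n$ and $[L':K]=m$ are coprime, so $L\cap L' = K$ inside a common algebraic closure, and therefore $L\otimes_K L'\cong L\cdot L'$ is a field, Galois over $K$ with $\mathrm{Gal}(L\cdot L'/K)\cong \mathrm{Gal}(L/K)\times\mathrm{Gal}(L'/K)\cong \ZZ/n\times\ZZ/m$. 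Since $\gcd(n,m)=1$, the Chinese remainder theorem gives $\ZZ/n\times\ZZ/m\cong\ZZ/nm$, so the composite is cyclic of order $nm$, as asserted.

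The main obstacle is the forward direction of the irreducibility criterion, namely extracting from reducibility a genuine $h$-th power in $K$. The naive constant-term argument only shows $\beta^d\in K$ for some $1\le d<n$, and one must manipulate the two exponent relations $\beta^d\in K$, $\beta^n=f$ to descend to the gcd exponent $e=\gcd(d,n)$ via a Bézout combination $e = \alpha d + \gamma n$; care is needed since $\beta$ itself need not lie in $K$, so I would phrase everything in terms of the multiplicative order of the image of $\beta$ in $\overline{K}^\times / K^\times$, which divides both $d$ and $n$ hence divides $e$, forcing $\beta^e\in K$. The coprimality argument and the Chinese remainder identification in the second part are routine by comparison.
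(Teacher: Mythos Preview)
Your argument is correct, but both halves proceed differently from the paper.

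For the irreducibility criterion, the paper does not use the constant-term/B\'ezout trick. Instead it exploits the action of $\mu_n$ on $K[z]$ by $z\mapsto \zeta^j z$: since $z^n-f$ is $\mu_n$-invariant, $\mu_n$ permutes its irreducible factors, and a factor $P$ of minimal degree has stabilizer some $\langle\zeta^h\rangle$ with $n=hk$, forcing $P$ to be a polynomial in $z^k$; writing $P(z)=Q(z^k)$ with $Q\mid w^h-f$, minimality of the stabilizer makes $Q$ linear, so $Q=w-a$ and $a^h=f$. Your approach is the standard Kummer argument and is arguably slicker; the paper's stabilizer argument is more geometric in flavour and avoids passing to a splitting field.

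For the tensor product, the paper does not invoke linear disjointness at all. It observes that $L\otimes_K L'$ is generated over $K$ by $zx$, which satisfies $(zx)^{nm}=f^m\phi^n$, and then applies the first assertion: if $f^m\phi^n=a^h$ for some prime $h\mid nm$, say $h\mid m$ with $m=hk$, then $b:=af^{-k}$ satisfies $b^h=\phi^n$, and a B\'ezout relation $1=hr+ns$ yields $\phi=(\phi^r b^s)^h$, contradicting that $L'$ is a field. Your abstract Galois-theoretic argument is cleaner; the paper's is self-contained (it feeds the second part back into the first) and has the side benefit of exhibiting the explicit cyclic generator $zx$, which is in keeping with the concrete field-extension descriptions used later in the paper.
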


\begin{proof}
For the first assertion, assume that such an element $a$ exists and set $n = hk$: then $z^n - f = z^{hk} - a^h$,
which is divisible by $z^k - a$.

Conversely, assume that $F :  = z^n - f $ is not irreducible in $K[z]$. We have an action of the group $G$
of $n$-th roots of $1$ on $K[z]$, $ z \mapsto \zeta^j z$. Since $F$ is $G$-invariant, $G$ permutes the
irreducible factors of $F$. If $F$ has a linear factor $z-a$, we have that $a^n = f$. Otherwise, the orbit of
a factor $P$ of minimal degree has cardinality $ < n$, hence we have a factor $P$ stabilized by a subgroup of $G$, generated by some power 
$\zeta^h$, where $ h \geq 2$ divides $n$,
and equal to the group of $k$-th roots of $1$ ($n = kh$).

Therefore this factor is of the form 
$$ P(z) = \sum_j b_j z ^{kj} ,$$
and, setting $ w : = z^k$, we get that $ Q(w) : = \sum_j b_j w ^{j} $ divides $w^h - f$.

Now, $Q$ must be linear, otherwise it would have a nontrivial stabilizer in the group of $h$-th roots of $1$,
contradicting that the stabilizer of $P$ has order exactly $k$. Since $Q$ is linear, it is of the form $w-a$,
hence $ a^h = f$.

For the second assertion, observe that $$L \otimes_K L' = K[x, z] / (z^n - f, x^m - \phi) $$ admits an action by the group
$\ZZ / n \times \ZZ/m = \ZZ/nm$ (since $n, m$ are relatively prime) which, given $\zeta$ a primitive $n$-th root of $1$, and $\e$  a primitive $m$-th root of $1$,
sends $ z \mapsto \zeta^i z$, $ x  \mapsto \e^j x$. Hence $xz$ is an eigenvector for  the standard  generator of $\ZZ/nm$, in particular it generates 
the $K$-algebra  $L \otimes_K L' $, satisfying
 indeed the relation $ (zx)^{nm} = f^m \phi^n$.

By the first assertion,
if $L \otimes_K L' $ is not a field, there exists $a \in K$ such that $a^h = f^m \phi^n$, where $h \geq 2$ divides $nm$.

Without loss of generality we may assume that $h$ is prime, and that it divides $m$, so that $ m = hk$.

Since $a^h = f^{hk} \phi^n$, it follows that, setting $ b : = a \cdot f^{-k}$, $b^h = \phi^n$.

However, since $n,m$ are relatively prime, there exist integers $r,s$ such that $ 1 = hr + ns$.

We finally derive:
$$ \phi = \phi ^{hr + ns} = ( \phi^r  b^s  )^h = : c^h$$
and since $h | m$ , we have contradicted that $L'$ is a field, because of  the first assertion of this proposition.

\end{proof} 

\begin{rem}\label{fact}
In the case where $K = \CC(X')$ is the function field of a factorial variety $X'$, the condition of existence of $a$
with $f = a^h$ can be verified once we write   $f= \frac{s}{t}$ as the quotient of two 
relatively prime sections of a line bundle $\sL$, and we then take the unique factorization of $f$:
$$f = \frac{\Pi_i s_i^{n_i}}{\Pi_j t_j^{m_j}},$$
where the $s_i, t_j,$ are prime.

$h$ should divide the greatest common divisor $GCD(n_i, m_j)$ of all the exponents $n_i, m_j $.
We shall often omit to specify this condition each time.

\end{rem}

More generally, we can consider the situation where $G$ is a finite 
Abelian group and we have a semidirect 
product $$G '  = G \rtimes (\ZZ/2),$$ where the semidirect product is determined by an automorphism
$$ M \in Aut (G), \ {\rm such \ that } \ M^2 = 1 : = Id_G .$$

Using the primary decomposition of $G$, $ G = \oplus_p G_p$, where $G_p$ is the $p$-primary component,
it suffices to describe $M_p : G_p \ra G_p$. 

$G_p$ is isomorphic to a direct sum 
$$G_p = \oplus_r (\ZZ/(p^r))^{n_r} \, .$$

\begin{lemma}
Let $G_p = \oplus_r (\ZZ/(p^r))^{n_r}$ be a finite Abelian group of exponent 
 a power of $p$, where $p$ is an odd prime,
and $M_p : G_p \ra G_p$ with $M_p^2 = 1$:
then we have a splitting $G_p = G_p^+ \oplus G_p^-$ into $+1$ and $-1$ eigenspaces. 
\end{lemma}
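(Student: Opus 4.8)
The plan is to exploit the fact that $p$ is odd, so that multiplication by $2$ is invertible on any finite abelian $p$-group, and then to construct the familiar idempotent projectors onto the eigenspaces of the involution $M_p$.

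First I would record the ring-theoretic setup: $M_p$ is an element of the endomorphism ring $\End(G_p)$ satisfying $M_p^2 = \Id$, equivalently $(M_p - \Id)(M_p + \Id) = 0$. Let $e$ denote the exponent of $G_p$, a power of $p$; since $p$ is odd, $\gcd(2,e) = 1$, so there is an integer $c$ with $2c \equiv 1 \pmod e$. Multiplication by $c$ is then a two-sided inverse to multiplication by $2$ on $G_p$, and I will simply write $\tfrac12$ for it. More structurally, $G_p$ is a module over $\ZZ_{(p)}$, in which $2$ is a unit.

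Next I would define $P_+ := \tfrac12(\Id + M_p)$ and $P_- := \tfrac12(\Id - M_p)$ in $\End(G_p)$ and verify $P_+ + P_- = \Id$, $P_+^2 = P_+$, $P_-^2 = P_-$ and $P_+ P_- = P_- P_+ = 0$. These all follow formally from $M_p^2 = \Id$; for instance $P_+^2 = \tfrac14(\Id + M_p)^2 = \tfrac14(2\,\Id + 2 M_p) = P_+$. From a complete set of orthogonal idempotents summing to $\Id$ one obtains the internal direct sum decomposition $G_p = \im P_+ \oplus \im P_-$. I would then identify the summands with the claimed eigenspaces by setting $G_p^+ := \im P_+$ and $G_p^- := \im P_-$: if $x = P_+ y$ then $M_p x = \tfrac12(M_p + M_p^2)y = x$, and conversely $M_p x = x$ forces $P_+ x = x$, so $\im P_+ = \ker(M_p - \Id)$; symmetrically $\im P_- = \ker(M_p + \Id)$.

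There is no real obstacle here; the whole argument is the standard semisimplicity of a $\ZZ/2$-representation over a ring in which $2$ is invertible (Maschke's argument). The only point that genuinely uses the hypothesis — and the one I would state explicitly — is the invertibility of $2$ on $G_p$, which is precisely where the oddness of $p$ enters and why the lemma is restricted to odd primes; for $p=2$ the idempotents do not exist and one expects the extra phenomena matching the $\pm 1 + 2^{k-1}$ cases found for cyclic $2$-groups earlier in Lemma \ref{squareroot}.
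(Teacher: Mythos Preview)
Your proof is correct and follows exactly the same approach as the paper's own proof, which simply writes $G_p^+ = \{ \tfrac12(x + M_p x)\}$ and $G_p^- = \{ \tfrac12(x - M_p x)\}$ without further comment. You have merely spelled out in detail the standard verification that these projectors are orthogonal idempotents, which the paper leaves implicit under ``as usual''.
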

\begin{proof}
As usual, $G_p^+ = \{ \frac{1}{2} (x + M_px) \} , \ G_p^- = \{ \frac{1}{2} (x - 
M_px)\} .$

\end{proof}

The following Lemma applies in particular to the case of a finite Abelian group of exponent $2^s$, 
$G_2 = \oplus_r (\ZZ/(2^r))^{n_r}$, endowed with an automorphism $M_2 : G_2 \ra G_2$ with $M_2^2 = 1$:
 
\begin{lemma}\label{M}
Let $G$ be a finite Abelian group  endowed with an automorphism
 $M : G \ra G$ such that  $M^2 = 1$.
 
Let $$G^+ : = Ker (M - 1), G^-  : = Ker (M + 1)$$ be the respective eigenspaces for the eigenvalues $+1, -1$, and 
define $U : = G^+ \cap G^- $, $W = G^+ + G^-$.

Then 

(1) $U = G^+_{[2]}=  G^-_{[2]}$, where $ G^{\pm}_{[2]}$ denotes the subgroup of $2$-torsion elements
of $G^{\pm}$, and we have a canonical isomorphism $W \cong  (G^+ \oplus G^- )/ U$;

(2) $ V : = G/W$ is a vector space  over $\ZZ/2$,  with an induced action by $M$, which is the identity;

(3)  for $x \in G, M (x) = x + F(x)$, where $F : G \ra G^-$ vanishes on $G^+$, and equals $-2x$ for $ x \in G^-$.

(4) For fixed $G,  G^+, G^-$, we have a set of possible homomorphisms $M$ with $M^2=1$ and such that  
$G^+$ is contained in the $(+1)$-eigenspace 
for $M$, and $G^-$ is contained in the $(-1)$-eigenspace for $M$,
parametrized by  the set 
$ Hom (V, U)$. 

(5) Take now an Abelian group $G$ containing  subgroups $W, G^-, G^+$ such that 
\begin{enumerate}
\item[(I)]
$W  = G^- +  G^+$,  
\item[(II)]
$W \cong (G^- \oplus  G^+)/U$,
where $  U = G^+_{[2]}=  G^-_{[2]},$
\item[(III)]
$G/W = : V$ is a  vector space  over $\ZZ/2$.
\end{enumerate}
  
Let $ F_W : W \ra G^-$ be defined as in (3).

 Then there exists an extension $F : G \ra G^-$ of $ F_W$, and  the set of automorphisms of $G$ with $M^2 =1$
 such that   $G^+$ equals the $(+1)$-eigenspace
  for $M$, $G^-$ equals the $(-1)$-eigenspace for $M$,
 is in bijection with  all those  extensions $F' = F + \phi, \phi \in Hom (V, U)$, satisfying the further properties:
 
 $G^+ = Ker (F')$,  $G^- = Ker (F' + 2)$.
 
\end{lemma}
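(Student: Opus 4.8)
My plan is to dispatch parts (1)--(3) by a direct computation with the operator $F(x):=Mx-x$, to read off (4) as a torsor statement, and to concentrate the effort on the existence claim in (5), which I expect to be the only genuine difficulty. For the preliminary parts, note that $M^2=1$ gives $M(Mx-x)=x-Mx=-(Mx-x)$ and $M(x+Mx)=x+Mx$, so $F$ maps $G$ into $G^-$ while $x+Mx\in G^+$; since $2x=(x+Mx)+(x-Mx)\in G^++G^-=W$ we get $2G\subseteq W$, which is (2), and also $Mx\equiv x\pmod{W}$, so $M$ acts trivially on $V=G/W$. An element of $U=G^+\cap G^-$ satisfies $x=Mx=-x$, i.e.\ $2x=0$, and conversely any $2$-torsion element of $G^\pm$ lies in the opposite eigenspace; this yields $U=G^+_{[2]}=G^-_{[2]}$, while the sum map $G^+\oplus G^-\to W$ has kernel the diagonal copy of $U$, giving the isomorphism $W\cong(G^+\oplus G^-)/U$ of (1). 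Finally $F$ is a homomorphism vanishing on $G^+$ and equal to multiplication by $-2$ on $G^-$, which is (3).

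For (4) I would use that $M\mapsto F:=M-1$ identifies the admissible involutions (those with $M|_{G^+}=1$, $M|_{G^-}=-1$ and $\operatorname{im}(M-1)\subseteq G^-$) with the homomorphisms $F\colon G\to G^-$ extending the prescribed $F_W$. If $F_1,F_2$ are two such, their difference vanishes on $W=G^++G^-$, hence factors through $V$; since $V$ is a $\ZZ/2$-vector space its image is $2$-torsion, so it lands in $G^-_{[2]}=U$. Thus the set of admissible $M$ is a torsor under $\Hom(V,U)$, and fixing one $F$ produces the asserted bijection with $\Hom(V,U)$.

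The heart of the matter is the existence statement in (5). First I would check that $F_W(a+b):=-2b$ is well defined on $W$ (using (I), (II)): two decompositions of a given element of $W$ differ by a member of $U$, which is annihilated by $2$. The decisive observation is that $\operatorname{im}(F_W)=2G^-$, since $-2b$ ranges exactly over $2G^-$. To extend $F_W$ across $0\to W\to G\to V\to 0$, choose lifts $x_i\in G$ of a $\ZZ/2$-basis of $V$; by (III) one has $2G\subseteq W$, and then $G$ is presented by $W$ together with the $x_i$ subject only to the relations $2x_i=w_i$, where $w_i:=2x_i\in W$ (any relation among the $x_i$ modulo $W$ has even coefficients, because the images $\bar x_i$ form a basis, and so reduces to a relation internal to $W$). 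Extending $F_W$ therefore amounts to choosing $y_i\in G^-$ with $2y_i=F_W(w_i)$, and such $y_i$ exist precisely because $F_W(w_i)\in\operatorname{im}(F_W)=2G^-$. This solvability of the ``halving'' equations, resting on (III) for the presentation and on the image computation above, is the step I expect to be the main obstacle.

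Once an extension $F$ is in hand, every $F'=F+\phi$ with $\phi\in\Hom(V,U)$ again extends $F_W$, and $M:=1+F'$ is automatically an involution: $F'$ takes values in $G^-$, on which it acts as multiplication by $-2$, so $F'\circ F'=-2F'$ and $M^2=1+2F'+F'\circ F'=1$. It then remains only to impose that the eigenspaces be exactly $G^\pm$. Since $F'|_{G^+}=0$ and $F'|_{G^-}$ is multiplication by $-2$ for every such $F'$, one always has $G^+\subseteq\operatorname{Ker}(F')=\operatorname{Ker}(M-1)$ and $G^-\subseteq\operatorname{Ker}(F'+2)=\operatorname{Ker}(M+1)$; the valid $M$ are exactly those for which these inclusions become equalities, which is the stated bijection. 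I would close by remarking that the whole argument ignores the odd part of $G$, where $W=G$ and there is nothing to extend, so that all the real content is located at the prime $2$.
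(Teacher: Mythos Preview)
Your proof is correct and, for parts (1)--(4), proceeds along the same lines as the paper's; your explicit addition of the hypothesis $\operatorname{im}(M-1)\subseteq G^-$ in (4) is in fact a clarification of what the paper's phrase ``a set of possible homomorphisms'' means, since the paper's own argument also works only with those $M$ for which $F=M-1$ lands in $G^-$.

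The one substantive difference is in (5). The paper proves the existence of the extension $F$ by a homological obstruction argument: it uses the exact sequence
\[
0\to\Hom(V,G^-)\to\Hom(G,G^-)\to\Hom(W,G^-)\to\Ext^1(V,G^-)
\]
and observes that the image of $F_W$ under the connecting map vanishes because $F_W(W)\subseteq 2G^-$ while $\Ext^1(V,G^-)\cong V\otimes_{\ZZ/2}(G^-/2G^-)$. You instead give a direct construction, presenting $G$ by $W$ together with lifts $x_i$ of a basis of $V$ subject only to the relations $2x_i=w_i$, and then solving the resulting halving equations $2y_i=F_W(w_i)$ inside $G^-$. Your argument is more elementary and self-contained; the paper's is shorter once the $\Ext$ machinery is granted and makes the structural reason (the vanishing in $G^-/2G^-$) more visible. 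Both rest on exactly the same key fact, namely $\operatorname{im}(F_W)=2G^-$.
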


\begin{proof}
Let us first consider the baby case where   $G$ is a vector space  over $\ZZ/2$.

Since  $ M : G \ra G$ satisfies $M^2=1$,  we get a Jordan normal form for $G$ with 
eigenvalues $1$ and  blocks of length $\leq 2$, that is, there is a direct sum $ G = G^+ \oplus G' $
such that $ M(v^+, v') = (v^+ + f(v'), v' )$, with $ f : G' \ra G^+$ linear and injective. Observing that $G' \cong G/G^+$,
we have established  that 
 we have  an injective  homomorphism
$ f : G/G^+ \ra G^+$ such, that for $y \in G$, letting $p_1 : G \ra G/G^+ $ be the natural surjection, then $M(y) = y + f (p_1 (y))$.
\medskip

In general, since $M : G \ra G$ satisfies  $M^2 = 1$, we get two filtrations by  submodules
$$ 0 \subset   Im (M - 1) \subset   Ker (M + 1) =:  G^- \subset  G = Ker (M^2 - 1),$$
$$ 0 \subset   Im (M + 1) \subset Ker (M - 1)  =:  G^+ \subset G = Ker (M^2 - 1) ,$$
which are invariant since $M$ acts as $-1$  on $G^-$, and as $+1$ on $G^+$.

Observe then that $U : = G^+ \cap G^-  = \{ y| 2 y =0, My = y\} \subset G_{[2]}$, hence (1)  follows immediately.

Moreover, $ 2y = (M + 1) y - ( M - 1)y$ shows that $G / W$ has exponent $2$, 
hence it is a vector space $V$ over $\ZZ/2$.   As we noticed,
$ Im (M - 1) \subset     G^- \subset W$, hence $(M-1) : G/W \ra G/W$ is $=0$,
hence (2) holds.

Write $M(x) = x + F(x)$.

Here $ F = M - 1$ vanishes on $G^+$, and equals $-2x$ for $ x \in G^-$, hence (3) is established
and $F|_W$ is canonically determined.

Now, $F : G \ra G^-$ is an extension of $F|_W$.

Observe that $G$ is an extension:

$$ 0 \ra W \ra G \ra V \ra 0.$$

Hence we have an exact sequence

$$ 0 \ra Hom (V, G^-) \ra Hom (G, G^-) \ra Hom (W, G^-) \ra Ext^1 (V, G^-).$$

Since $V$ has exponent $2$, and  $F|_W$ is  determined, the choice of an extension $F$ of  $F|_W$ 
is  determined up to a
homomorphism  $ \phi \in Hom (V, U) $.

$\phi$ can be taken arbitrarily, since $M$ acts as the identity on $V$, hence also $ (M + \phi)^2 = 1$:
hence (4) holds.

Take now an Abelian group $G$ containing  subgroups $W, G^-, G^+$ as in (5) such that $W  \cong (G^- \oplus  G^+)/U$,
where $  U = G^+_{[2]}=  G^-_{[2]}.$

Take $F|_W$ as above: the question is whether there exists $F : G \ra G^-$ extending $F|_W$.
If the answer is positive, then we define $M : = F +1$, and clearly $M^2=1$.

It suffices to see that $F|W $ is sent to $ 0 \in Ext^1 (V, G^-)$, because then  there exists $F \in Hom (G, G^-)$ 
extending $F|W $.

Now, the   coboundary map of the  exact sequence  is  given  via the natural pairing 
$$ ( Ext^1 (V, W ) \times Hom ( W, G^-)) \ra  Ext^1 (V, G^-) $$ 
by pairing with the extension class of $ 0 \ra W \ra G \ra V \ra 0 $
(an element of $Ext^1 (V, W )$).

Observe also that $Ext^1 (V, W)$, since $V$ is a vector space over $\ZZ/2$, equals 
$$V \otimes_{\ZZ/2} Ext^1 (\ZZ/2, W) = V \otimes_{\ZZ/2} (W/ 2W),$$
similarly $Ext^1(V,G^-)$  equals
$$V \otimes_{\ZZ/2} Ext^1 (\ZZ/2, G^-) = V \otimes_{\ZZ/2} (G^-/ 2G^-),$$

Now, we have that $F|_W $ sends $W \ra 2 G^-$, hence $F|_W$ maps to $0$ and there exists $F \in Hom (G, G^-)$ 
extending $F|_W $.

Finally, $M(x) = x \Leftrightarrow F(x) = 0$, and $M(x) = - x \Leftrightarrow F(x) = - 2x$.

This said, we have that $G^+$ is just contained in the $(+1)$-eigenspace, and similarly  $G^-$ is 
just contained in the $(-1)$-eigenspace. Equality holds if  $G^+ = Ker (F)$, $G^- = Ker (F + 2)$
\end{proof}

\begin{rem}
(i) The case where $G$ is of exponent $2$ (a vector space over $\ZZ/2$), and $ U = W = G^+$ shows that the $2$- torsion subgroup of $G$
can be strictly larger than $U$.

(ii) The same example shows that, fixed $G^+$, the $(+1)$-eigenspace 
for $M$ contains $G^+$ and it is equal
to it if and only if $f : V \ra U$ is injective. If we change  $f$  with $ f + \phi$, and the latter is not injective,
then $(+1)$-Eigenspace becomes larger, it is equal to the inverse image of  $Ker ( f + \phi) \subset V $
under the projection $ p : G \ra V$. Likewise also $U$ becomes larger.

(iii) An easy issue of Lemma \ref{M} 
is the one where  $G$ is any group, and $ M = \pm 1 + \varphi$,
where $\varphi : G \ra G_{[2]}$ is a homomorphism vanishing on $G_{[2]}$. Let $\e \in \{ +1, -1\}$,
so that  $ M (x) = \e x + \varphi(x)$.  Then 
$$ M^2 (x) = \e (\e x + \varphi(x)) + \varphi ( \e x + \varphi(x)) = x +  \varphi( \varphi(x)) = x.$$

In the case $\e = 1$, $G^+ = Ker (\varphi) \supset G_{[2]}$, $G^- = Ker (\varphi -2)$, and 
$$ U = Ker (\varphi) \cap  Ker (2) =G_{[2]}.$$
\end{rem}

\section{The function field extensions in the four basic cases}

We begin with a 
very simple observation.
\begin{remark}\label{realstructures}
Let $\RR(X')$ be a finitely generated extension field of $\RR$ which does not contain $\sqrt{-1}$, let $\Phi \in \RR(X')$ be not a square, and consider the field extension $\RR(X^0)$ given by $y^2 = \Phi$. 

Then there are 
two real structures on $\RR(X^0)$: for one of them (the one we shall choose) 
 $\tau(y) = y$,
for the other $\tau'(y) = - y$ ($\tau ' = \iota \circ \tau$, where $\iota$ is the Galois  involution $y \mapsto - y$).

\end{remark}

\subsection{The case where $G'$ is a dihedral group}

We have here $ \tau g \tau = g^{-1} $.

Let us assume that the cyclic field extension is generated by $w$, such that $w^n = F$, $F \in \CC(X')$.

If $g$ is a generator of $G$, we have $$ g(w) = \zeta w,$$ where $\zeta$ is a primitive $n$-th root of $1$.

Hence
$$ g (\tau (w)) = \tau ( g^{-1} (w)) = \tau (\zeta^{-1} w) = \zeta \tau (w),$$
so that $\tau (w)$ is also an eigenvector for  $g$ with eigenvalue $\zeta$ 
(in particular  there exists $A \in \CC(X')$ with $\tau(w) = A w$).

Let $V_{\zeta}$ be the eigenspace for  the eigenvalue $\zeta$: then any nonzero element $z$ inside $V_{\zeta}$ generates the extension
$ \CC(X') \subset \CC(X)$. 

If there is now a $w \in V_{\zeta}$ such that $z : = \tau(w) + w \neq 0$, then $z\in \RR(X)$ generates the extension 
and $z^n = f \in \RR(X')$, as we want to show.

Otherwise, $\tau$ would act as $-1$ on $V_{\zeta}$. This assumption leads to a contradiction, since
$V_{\zeta}$ is a complex vector space, and we would have $\tau(\la w) = \overline{\la} (-w) \neq - \la w$
if $\la$ is not a real number.

\subsection{The case where $G'$ is a direct product $G' = G \times \langle \tau\rangle$.}

We have here $ \tau g \tau = g $.

A first observation is that $g$ preserves the two eigenspaces for $\tau$, therefore $ g (\RR(X)) = \RR(X)$,
and $ g (\sqrt{-1} \RR(X)) = \sqrt{-1} \RR(X)$.

Assume that the  field extension is generated by $z$, such that $z^n = f$, $f \in \CC(X')$.

If $g$ is a generator of $G$, we have $ g(z) = \zeta z$, where $\zeta$ is a primitive $n$-th root of $1$.

Hence
$$ g (\tau (z)) = \tau ( g (z)) = \tau (\zeta z) = \zeta^{-1} \tau (z),$$
so that $\tau$ exchanges the two eigenspaces $V_{\zeta}$ and $V_{\zeta^{-1}}$.

Hence we have $$\phi : = z \cdot \tau(z) \in \RR(X').$$

Moreover, if $ z = a + \sqrt{-1} b$, 
$a,b \in \RR(X)$, then $\phi = a^2 + b^2$, a  function which on the real locus takes real positive values.

If we now set $ w : = \tau(z) $, we have $w^n = \tau(f)$ and, setting 
$$  F : = f + \tau(f) \Rightarrow F  \in \RR(X'),$$ 
we have
$$ (*) \ z w = \phi , \ z^n + w^n =  F. $$  
It follows that $f , \tau(f)$ are roots of the quadratic equation
$$ x^2 -  F x + \phi^n = 0 .$$

 Let $\psi \in  \RR(X')$ be twice the imaginary part of $f$: then 
$$f = \frac{1}{2}(F + \sqrt{-1} \psi), \tau(f) = \frac{1}{2}(F - \sqrt{-1} \psi )$$ and 
the discriminant $\Delta$ is minus a square in $ \RR(X')$, since
$$ (**) \, \Delta =  F^2 - 4 \phi^n = - \psi^2.$$

Conversely, if there are $F, \phi, \psi \in  \RR(X')$ satisfying $(**)$, then  equations $(*)$  define
 a cyclic extension of $\CC(X')$ of order $n$: since we can eliminate $w = \phi / z$, and for the resulting
equation
$$ z^{2n} - F z^n + \phi^n = 0 \Leftrightarrow (z^n -  \frac{1}{2}(F + \sqrt{-1} \psi)) 
(z^n - \frac{1}{2}(F - \sqrt{-1} \psi ))=0$$
we can take the root  $z$ of 
$z^n = f : = \frac{1}{2}(F + \sqrt{-1} \psi)$  or the root $w$ of  
$w^n = \tau(f) = \frac{1}{2}(F - \sqrt{-1} \psi)$.

 If  $f : = \frac{1}{2}(F + \sqrt{-1} \psi)$  satisfies the property  that there do not  exist any divisor of 
 $n, h>1,$ and a rational function 
$\varphi = \al + \sqrt{-1} \beta$ with 
$(\al + \sqrt{-1} \beta) ^h=f$, then  Proposition 2.2 applies and the cyclic extension is a field
(cf. remark \ref{fact}). 

Exchanging $\psi$ with $- \psi$ has the effect of replacing $z$ with $w$.

We can then extend $\tau$ to $\CC(X)$ by setting $ w : = \tau(z) \Rightarrow z = \tau(w)$.

Finally, concerning the existence of such functions $F, \phi, \psi$ we simply use the rationality of the variety defined by $(**)$.

Namely, let $n = 2m + 1$ for $n$ odd, else let $n = 2m+2$. Setting 
$$ F = \phi^m \hat{F}, \ \psi = \phi^m \hat{\psi},$$
we reduce to the respective equations
$$ \phi = \frac{1}{4} (\hat{F}^2 + \hat{\psi}^2), \  4  \phi^2 = \hat{F}^2 + \hat{\psi}^2.$$

In the first case, where $n = 2m + 1$, we see that the solutions correspond to the choice of two 
arbitrary functions  $\hat{F},   \hat{\psi} \in \RR(X')$
 such that $(\hat{F}^2 + \hat{\psi}^2)^m (\hat{F} + \sqrt{-1} \hat{\psi})$ 
 is not of the form $\varphi ^h$ for any divisor $h$ of $n=2m+1$. 
  Since we have 
 $$4^m (F + \sqrt{-1} \psi) = ( \hat{F}^2 + \hat{\psi}^2 )^m 
 ( \hat{F} + \sqrt{-1} \hat{\psi} ) = 
  ( \hat{F} + \sqrt{-1} \hat{\psi} )^{m+1} ( \hat{F} - \sqrt{-1} \hat{\psi} )^m,$$
we conclude that,  if $F + \sqrt{-1} \psi =\varphi^h$, then
$$ 4^m  \varphi^h =  ( \hat{F} + \sqrt{-1} \hat{\psi} )^{m+1} 
( \hat{F} - \sqrt{-1} \hat{\psi} )^m.$$

We can for instance show that $h=1$ in the following situation:
if  $X'$ is factorial, any rational function $f$ can be written uniquely as $f = \frac{s}{t}$,
where $s,t$ are relatively prime sections of a line bundle $\sL$. We shall say that 
\begin{itemize}
\item
$f$ is irreducible if either $s$ or $t$ is prime, 
\item
$f$ is strongly  irreducible 
if both $s$ and $t$ are prime,
\item
$F = \frac{A}{B}$ is not associated to $f =\frac{s}{t}$
if $A$ is relatively prime to $s$ and 
$B$ is relatively prime to $t$.

\end{itemize}

Now, if   $( \hat{F} + \sqrt{-1} \hat{\psi} ) = \frac{s}{t} , \ 
( \hat{F} - \sqrt{-1} \hat{\psi} ) = \frac{A}{B}$ are irreducible and not associated, then 
write $\varphi =  \frac{a}{b}$. We obtain that 
$$ a^h = s^{m+1} A^m , b^h =  t^{m+1} B^m.$$
If $s$ is prime, then the first equality implies that $h$ divides $m+1$, and if $t$ is prime, then the second equality 
also implies $ h | (m+1)$. Similarly, if $A$ is prime, then $ h | m$,  and the same conclusion holds if $B$ is prime.
Hence $h$ divides $m, m+1$, hence $h=1$.

 \medskip

In the second case, where $n=2m+2$, we simply have to parametrize the quadric: setting  
$$ \phi = 1 + \la \tilde{\phi} , \hat{F} = 2, \hat{\psi} = \la \tilde{\psi} $$
we get 
$$ 8 \tilde{\phi} + 4 \la \tilde{\phi}^2 =  \la  \tilde{\psi}^2 \Leftrightarrow \la = \frac{8 \tilde{\phi} }{\tilde{\psi}^2 - 4  \tilde{\phi}^2} ,$$
hence the solutions correspond to the choice of two arbitrary functions $\tilde{\phi} , \tilde{\psi}$
 satisfying a similar condition to the  above one for the case where $n$ is odd.

Indeed in this case, up to constants, $$ F + \sqrt{-1} \psi = 
( 1 + \lambda \tilde{\phi} )^m ( 2 + \sqrt{-1} \lambda \tilde{\psi} ) .$$ 
A similar argument shows that, if  $1 + \lambda \tilde{\phi} , 2 + \sqrt{-1} \lambda \tilde{\psi} $
are irreducible and not associated, then $h=1 $.

\subsection{The case where $ \tau g \tau = g^{1 + n/2}$ (and $n$ is divisible by $8$)}The first basic observation here is that $G'$ contains the index $2$
subgroup  $G'^{ev} : = (2\ZZ / n \ZZ) \times (\ZZ/2) = : G^{ev} \times (\ZZ/2)$,  generated by $g^2$ and by $\tau$.

Hence we have a sequence of field extensions 
$$ \CC(X') \subset \CC(X^0) \subset \CC(X),$$

where $X^0 : = X / G^{ev}$. The extension $\CC(X^0) \subset \CC(X)$ is dihedral-like,
since $ \tau g^2 \tau = g^{2(1 + n/2)} = g^2$,
thus it has the description given in the previous subsection.

For convenience  set $ n = 2 N$, and  recall that $N$ is even.

Let  $$\CC(X) = \CC(X')[z] / (z^n - P), \ g(z) = \zeta z.$$

Then the extension $\CC(X^0) \subset \CC(X)$ is given as above by ($w : = \tau(z)$)
$$ (*) \ z w = \phi , \ z^N + w^N =  F = f + \tau (f), f^2 = P, $$
$$ (**) \  F^2 - 4 \phi^N = - \psi^2.$$ 

From $  g  \tau  = \tau g^{1 + n/2} $ follows that  
$$ g (w) = g  \tau (z) =  \tau g^{1 + n/2} (z) = \tau (- \zeta z ) = - \zeta^{-1} w,$$
hence $ g (\phi) = g ( zw) = - zw = - \phi,$
and $\phi^2$ is $G'$ invariant, so that $$\phi^2 = \Phi \in \RR(X').$$

Moreover $g (F) = - z^N - w^N = - F$; hence $\phi$ and $F$ generate the degree two extension 
 $ \RR(X') \subset \RR(X^0)$ and there exists therefore $A \in \RR(X') $ such that
 $ F = A \phi$.
We can rewrite now  condition $(**)$ as:
$$   \  A^2 - 4 \Phi^{N/2 -1}  = - (\psi )^2 / \Phi .$$
Observe that, since $ g (z^N ) = - z^N$, $\psi \in \RR(X^0)$ satisfies $g (\psi) = - \psi$,
and in particular $\psi^2 = \Psi \in \RR(X')$. 

Moreover, since $\phi, \psi $ generate the quadratic extension $ \RR(X') \subset \RR(X^0)$  there exists therefore 
$B \in \RR(X') $ such that
 $ \psi = B \phi$.
 
 Hence we can rewrite $(**)$ as
$$  (***) \  A^2 - 4 \Phi^{N/2 -1}  = - B^2.$$

Conversely, given $(***)$, we define $\psi, \phi$ by $\psi^2 = \Psi $, respectively $\phi^2 = \Phi$,
set $  F : = A \phi$ and observe that these generate a quadratic extension 
which we denote $ \RR(X') \subset \RR(X^0)$.

Then we use $(*)$ to get a cyclic extension $ \RR(X^0) \subset \RR(X)$.

Since $ g(F) = - F$, $g (\phi )= - \phi$, we can  extend $g$ to $\RR(X)$ setting
$ g (z) : = \zeta z$, $ g (w) : = - \zeta^{-1} w,$ and $\tau$ can be extended setting $ \tau (z) : = w$.

 The datum  of such functions $A, \Phi,  B$ satisfying $(***)$  is shown, exactly as in the previous subsection,
to be equivalent to the datum of two arbitrary functions in $\RR(X')$.

\subsection{The case where $ \tau g \tau = g^{-1 + n/2}$ (and $n$ is divisible by $8$)} Again  a basic observation here is that $G'$ contains the index $2$
subgroup $G'^{ev} : = (2\ZZ / n \ZZ) \rtimes (\ZZ/2)$, generated by $g^2$ and by $\tau$.

Hence we have a sequence of field extensions 
$$ \CC(X') \subset \CC(X^0) \subset \CC(X),$$

where $X^0 : = X / G^{ev}$. The extension $\CC(X^0) \subset \CC(X)$ is of standard type,
since $ \tau g^2 \tau = g^{2(-1 + n/2)} = g^{-2}$
so it has the description given in the first subsection (it is generated by a real element which is an eigenvector for $g^2$
with eigenvalue $\zeta^2$).

Set again $ n = 2 N$, where $N$ is even.

Pick as usual a generator $z$ of the extension such that  
 $$ z^n = z^{2N} = P \in \CC(X'), \ g(z) = \zeta z.$$

We have $ g^2 \tau (z) = \tau g^{-2} (z) = \zeta^2 \tau (z)$, and we set
$$ W : = z + \tau (z)  \Rightarrow W \in \RR(X).$$ 
If $ W =0$, then $ z = - \tau (z)$ and it follows that
$$ - \zeta z = g(-z) =  g \tau (z) = \tau g^{-1 + n/2} (z) = - \zeta \tau(z) = \zeta z,$$
a contradiction.

Hence $0 \neq W \in \RR(X)$ and it generates the cyclic extension $ \RR(X^0) \subset \RR(X),$
so that we may write $W^N = f \in  \RR(X^0) $.

We can write $ 2 z = W + u$, where $ u :  = z - \tau(z) \in \sqrt{-1} \RR(X).$ 

The action of $g$ is as follows:
$ g \tau (z) = \tau g^{-1 + n/2} (z) = - \zeta \tau(z)$, hence 
$$ g (W) = \zeta z  - \zeta \tau(z) = \zeta u, $$
$$ g(u) = g (z - \tau(z)) = \zeta ( z + \tau(z) ) = \zeta W.$$

Since $W^N = f \in \RR(X^0)$, $g (f) = g(W)^N = - u^N  \in \RR(X^0)$, hence $$u^N = h : = - g(f).$$ 

We have $ g(f) = -h, g(h) = -f$ hence 
$$y : = f + h  \Rightarrow y \in \RR(X^0)  , \ g (y) = -y  \Rightarrow y^2 = \Phi \in \RR(X').$$

Likewise $ g(fh) = fh \Rightarrow fh = \Psi \in \RR(X').$

Since every element  $f$  in $\RR(X^0)$ can be written in the form $ f = \al y  + \be , \ \al,\be \in \RR(X')$,
the equations $ f + h = y ,  g (f) = -h, f h = \Psi,$ imply that 
$$ f =  \frac{1}{2} y + \be, \ h = \frac{1}{2} y - \be, \ \Psi = \frac{1}{4} \Phi - \be^2.$$ 

Up to now we have described the field extension as a cyclic extension $W^N = f = \frac{1}{2} y + \be$ 
of a quadratic extension $y^2 = \Phi$, and clearly $ g(y) = -y$. 

 In this situation, the fact that the global extension is cyclic means
that   $g$ extends to the larger field;  indeed, we know that   $g( W) = \zeta u$, where $u^N = h =   \frac{1}{2} y - \be$,
hence the root $u$ must be in the field extension; and  since $W, u$ are both in the $\zeta^2$-eigenspace for $g^2$,
 this condition amounts to:
 $$ (a) \  \exists C, D \in \CC(X') \ {\rm such \ that} \ u = W (C + y D).   $$
 
 Recall that $\tau(W) = W, \tau (u) = -u$: hence 
 $$\tau (C + y D) = - C - y D \Rightarrow C, D \in \sqrt{-1} \RR(X').$$

Furthermore, 
$$ g (W) = \zeta u , \ g (u) = \zeta W \Rightarrow W = u g (C + y D) \Rightarrow $$
$$ (b) \ C^2 - \Phi D^2 = 1.$$

 Condition $ (b)$ means  that $(C, D, 1)$ is a point of the conic
$ X^2 - \Phi Y^2 - Z^2 = 0$.

 Since the real conic has already the rational point $( 1, 0 , 1)$, we get a  parametrization 
setting  $C = 1 + D \Theta$,  $\Theta \in \CC(X')$, and therefore, since $(1 + D \Theta)^2 - \Phi D^2 =1$,
we obtain 
$$ D = - \frac{2 \Theta}{\Theta^2 - \Phi}, \  C = - \frac{\Phi +  \Theta^2 }{\Theta^2 - \Phi}.$$

Multiplying numerator and denominator by $(\tau(\Theta)^2 - \Phi)$,
our condition is that 
$$ 2 \Theta ( \tau(\Theta)^2 - \Phi), \  (\Phi +  \Theta^2  ) ( \tau(\Theta)^2 - \Phi)$$
are both imaginary.

 An easy  solution is given by taking $\Phi =  \Theta \cdot \tau(\Theta)$.
 
 This is the only one, because, writing $ \Theta = A + \sqrt{-1} B$, the first condition is that
 $ (A+ \sqrt{-1} B) ( (A- \sqrt{-1} B)^2 - \Phi)$ is imaginary, equivalently,
 $$ A ( A^2 + B^2 - \Phi ) = 0.$$
 The solution $ A = 0$ must be discarded since then $C$ is real, a contradiction.
 
 Remains exactly the solution
 $$ A^2 + B^2  =  \Phi \Leftrightarrow \Phi =  \Theta \cdot \tau(\Theta),$$
 and then
 $$ (Theta) :  \   \Phi =  \Theta \cdot \tau(\Theta), \ D = - \frac{2 }{\Theta - \tau(\Theta)}, \  C = - \frac{\tau(\Theta) +  \Theta }{\Theta - \tau(\Theta)}.$$

The condition that $u$ is a root of $u^N = h = \frac{1}{2} y - \be$ is then expressed,  using (a), as
$$ (a') \   \frac{1}{2} y - \be = ( \frac{1}{2} y + \be)  (C + y D)^N,$$
equivalently it can be expressed by requiring that the ratio 
$$ \frac{  \frac{1}{2} y - \be  }  {  \frac{1}{2} y + \be} = \frac{1} {\Psi} (\frac{1}{2} y - \be)^2$$
is the $N$-th power of $(C + y D)$.

Observe that we wrote condition  $(a')$ as a condition in $\RR(X^0)$,
but this is indeed equivalent to two conditions in  $\RR(X')$, 
since we require that
$$(C + y D)^N = \sum_{j} {N\choose {2j}} C^{N-2j}  D^{2j} \Phi^j + y {N \choose {2j+1}} C^{N-2j-1}  D^{2j+1} \Phi^j  = : M + M' y$$ 
equals
$$ \frac{1} {\Psi} (\frac{1}{2} y - \be)^2 = \frac{1} {\Psi}  (\frac{1}{4} \Phi + \beta^2 ) - y \frac{1} {\Psi}   \beta.      $$

These equations can be rewritten   as two  equations for $\be$,
namely
$$M \Psi = M (\frac{1}{4} \Phi - \be^2) = \frac{1}{4} \Phi + \be^2, \ \  M' \Psi = M' (\frac{1}{4} \Phi - \be^2) = - \be,$$
hence, using the first equation to eliminate $\be^2$ in the second one,
$$ (a'') \ \  \be^2 (1 + M)  = \frac{1}{4} \Phi (M-1), \ \ $$
$$ (a''') \ \  -  \be = \frac{1}{2} \Phi \frac{M'}{M+1}.$$

Then $\be$ is determined by the  equation $ (a''')$,   and   $\be$ is real  since,  $C,D$ being imaginary,  $M, M'$ are real;
and we want  then $\be$ to yield a solution of the  equation $ (a'')$.  This amounts to:
$$(\frac{1}{2} \Phi \frac{M'}{M+1})^2  (1 + M)  = \frac{1}{4} \Phi (M-1) \Leftrightarrow \Phi  (M')^2= (M^2 -1).$$

However, this last condition $ 1 = (M^2 - \Phi  (M')^2) = M ^2 - y^2 (M')^2 $ is automatically true, since 
$$M ^2 - y^2 (M')^2 = (M + M' y) (M - M' y) = (C + y D)^N (C - y D)^N = (C^2 - \Phi D^2)^N = 1.$$ 

The final conclusion is that  also $\be$ is determined by  $\Theta \in  \CC(X')$.

Conversely, it  is now easy to see that we get a real cyclic covering with group $G = \ZZ/n = \ZZ/ 2N$, 
and  exhibiting 
the esoteric case  $ \tau g \tau = g^{-1 + n/2}$ provided that: 
\begin{itemize}
\item
 we are given $\Theta \in \CC(X'), $ such that 
 \item
 $\Phi : = \Theta \cdot \tau(\Theta)$ is not a square, and
 \item
 we are given  $C, D \in  \sqrt{-1} \RR(X')$    such that condition $ (b) \ C^2 - \Phi D^2 = 1$ holds (hence $C, D$ are determined by $\Theta$
as in  formula (Theta)).
\end{itemize}

In fact, letting $y$ be defined by $y^2 = \Phi$ and choosing (see remark \ref{realstructures}) the real structure 
on $\CC (X^0)$ such that $ \tau(y) = y$, and 
 letting  $\beta$ be  determined by $\Theta$ according to $(a''')$,  so that     
condition $(a')$ holds, 
the covering with group $G = \ZZ/n = \ZZ/ 2N$ is defined by the extension $$W^N = f : =   \frac{1}{2} y + \be .$$

 Defining then $\tau ( W) : = W, \ \tau (u) : = - u$ we obtain a real cyclic covering exhibiting 
the esoteric case  $ \tau g \tau = g^{-1 + n/2}$.

\bigskip
 
We shall  summarize the above discussion in the following theorem  \ref{field}:

\begin{theo}\label{field}
Let   $\RR(X), \RR(X'),$ be two real function fields, that is, two finitely generated field extensions of $\RR$ such that
$\CC(X) : = \RR(X) \otimes_{\RR} \CC$ and $ \CC(X')$ are fields with an antilinear automorphism ($\tau, \tau'$ respectively)
induced by complex conjugation on $\CC$.

Then a real cyclic covering $ \RR(X') \subset \RR(X)$, that is, an extension inducing a Galois extension 
$ \CC(X') \subset \CC(X)$ with Galois group $G = \langle g\rangle \cong  \ZZ/n$, and such that $\tau$ normalizes $G$ (then conjugation is given
via 
$\tau g \tau = g^m$, $m ^2=1 \in G$), is a fibre product of two such real cyclic coverings belonging to
the following four basic types:
\begin{enumerate}
\item
 $m=-1$ if and only if we are in the {\bf Standard (totally real) case}: there exist $f \in   \RR(X'), z \in \RR(X)$ such that
 $$ z^n = f ,  g (z) = \zeta z , \langle \zeta\rangle = \mu_n = \{ \eta | \eta^n = 1\}.$$ 
 \item
  $m= +1$ if and only if we are in the {\bf Dihedral-like  case}: there exist
  $$F, \phi, \psi \in \RR(X'), \ {\rm \ such \ that \ } (**) \   4 \phi^n = \psi^2 + F^2,$$
  such that the extension is given as
  $$ \CC(X) = \CC(X') (z) , \ {\rm \ and, \ for  \ } \ w: = \tau(z),  \ (*) zw = \phi, \ z^n + w^n = F.$$
  
  The choice of such $F, \phi, \psi \in \RR(X')$  satisfying $(**)$ is equivalent to the choice of two arbitrary functions
   $\tilde{\phi}, \tilde{\psi} \in \RR(X')$.
\item
$n$ is divisible by $8$ and, defining $N:=\frac{n}{2}$,  
$ m = 1 + N$ if and only if we are in the {\bf Twisted    case}:
there exist 
$$A, B, \Phi  \in  \RR(X'), \ {\rm \ such \ that \ } (***) \   A^2 + B^2 = 4 \Phi^{N/2 -1}$$
and such that  $\phi^2 = \Phi$ defines a real quadratic field extension $\RR(X') \subset \RR(X^0)$.
   
Then, setting $ F: = A \phi, \psi : = B \phi$, the field extension $\CC(X)$ is generated by $z$ such that ($w : = \tau(z)$)
$$ zw = \phi, \ z^N + w^N = F, \ g(z) = \zeta z, g(w) = - \zeta^{-1} w.$$
   
The choice of such functions $A, B, \Phi  \in  \RR(X')$  satisfying the above conditions  is equivalent to the choice of 
two arbitrary functions
in $\RR(X')$.
 
\item
$n$ is divisible by $8$ and, defining $N:=\frac{n}{2}$, 
 $ m = - 1 + N$ 
if and only if we are in the {\bf Esoteric   case}:
there exist 
$$\Phi , \be \in  \RR(X'),  \ C, D \in i \  \RR(X')  \ {\rm \ such \ that \ } (b) C^2 - \Phi D^2 =1,$$  and moreover,  
if  $y^2 = \Phi$ defines  a quadratic field extension $\RR(X') \subset \RR(X^0) \ni y$, we have:
$$ (a')  \frac{1}{2} y -  \be = ( \frac{1}{2} y + \be) (C + y D)^N.$$
Then, setting $f : = \frac{1}{2} y + \be$,  the real extension $\RR(X^0) \subset \RR(X)$ is generated by $W$, where   
$$ W^N = f , \ \tau(W) = W, g (W) = \zeta u , \ u : = W (C + y D), \ \tau(u) = -u.$$

The choice of such functions  satisfying the above conditions is equivalent to the choice of 
$\Theta = A + \sqrt{-1} B \in \CC(X')$ such that 
$ \Phi : = A^2 + B^2$ is not a square.

\end{enumerate}
\end{theo}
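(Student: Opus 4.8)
The plan is to assemble the theorem from the three ingredients prepared above: the arithmetic classification of Lemma \ref{squareroot}, the field-theoretic Proposition \ref{nonpower}, and the explicit constructions carried out in the four subsections of this section. First I would reduce the general situation to the four basic types. Given $m \in (\ZZ/n)^*$ with $m^2 = 1$, Lemma \ref{squareroot} lets me write $n = N_1 \cdot N_2$ with $N_1, N_2$ coprime and, under the Chinese remainder isomorphism $G \cong \ZZ/N_1 \times \ZZ/N_2$, $m$ equal to one of the pairs in items (4), (5), (6). Since in a cyclic group every subgroup is characteristic, $\tau$ (which normalizes $G$) preserves the two complementary cyclic subgroups, so the two intermediate fields they fix are $\tau$-stable. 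Hence $\CC(X') \subset \CC(X)$ is the compositum of two $\tau$-stable cyclic subextensions of coprime degrees $N_1, N_2$, each carrying one of the pure types $m = \pm 1$ or (when $8 \mid N_i$) $m = \pm 1 + N_i/2$. This exhibits $\RR(X') \subset \RR(X)$ as the fibre product over $\RR(X')$ of two real cyclic coverings belonging to the four basic types.

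Next I would check that such a fibre product is again a real cyclic covering and that the passage is reversible, so that the four types genuinely generate everything. Proposition \ref{nonpower} gives that, for coprime $N_1, N_2$, the tensor product over $\CC(X')$ of the two cyclic field extensions is again a cyclic Galois field extension of degree $N_1 N_2 = n$, generated by the product of the two generators; one verifies that $\tau$ acts diagonally on the tensor product, so the compositum is defined over $\RR$ and realizes the prescribed $m = (m_1, m_2)$ under the CRT isomorphism. Together with the irreducibility criterion of Proposition \ref{nonpower} and Remark \ref{fact}, this also pins down exactly when the fibre product stays irreducible.

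It then remains to establish each of the four basic types, proving both implications in each case, which is precisely the content of the four subsections. For the \textbf{Standard case} $m = -1$ one produces a real eigenvector $z \in V_\zeta$ for $g$ with $z^n = f \in \RR(X')$, the only point to note being that $\tau$ cannot act as $-1$ on the complex line $V_\zeta$. For the \textbf{Dihedral-like case} $m = +1$ one sets $\phi = z\,\tau(z)$, $F = z^n + \tau(z)^n$, derives the relation $(**)$, and conversely reconstructs the covering, using the rationality of the variety $(**)$ to obtain the parametrization by two arbitrary functions. For the \textbf{Twisted case} $m = 1 + N$ one descends through $X^0 = X/G^{ev}$, notes that $g$ acts by $-1$ on both $\phi$ and $F$, writes $F = A\phi$ and $\psi = B\phi$, and rewrites $(**)$ as $(***)$.

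The main obstacle is the \textbf{Esoteric case} $m = -1 + N$, where I would concentrate the effort. The difficulty is that the natural real generator $W = z + \tau(z)$ and the imaginary element $u = z - \tau(z)$ are interchanged by $g$ up to the factor $\zeta$, so that cyclicity of the whole covering is not automatic but forces the relation $(a)$, namely $u = W(C + yD)$ with $C, D$ purely imaginary. The heart of the argument is to deduce from $\tau$- and $g$-equivariance that $(C, D, 1)$ lies on the conic $(b)$, $C^2 - \Phi D^2 = 1$, to parametrize this conic rationally through its real point $(1,0,1)$, obtaining $C, D$ as explicit functions of a parameter $\Theta$ with $\Phi = \Theta \cdot \tau(\Theta)$, and then to determine $\beta$ from equations $(a'')$ and $(a''')$. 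The step most likely to break down is the consistency check that the $\beta$ fixed by $(a''')$ actually solves $(a'')$; I would resolve it exactly as in the subsection, by recognizing the required identity $\Phi (M')^2 = M^2 - 1$ as the norm relation $M^2 - \Phi (M')^2 = (C + yD)^N (C - yD)^N = (C^2 - \Phi D^2)^N = 1$, which is automatic from $(b)$. Verifying that $g$ and $\tau$ then extend consistently to $\RR(X)$ (with the choice of real structure $\tau(y) = y$ dictated by Remark \ref{realstructures}) and that irreducibility holds completes this case and hence the theorem.
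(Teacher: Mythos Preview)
Your proposal is correct and follows essentially the same route as the paper: the theorem is explicitly presented there as a summary of the preceding discussion, and you assemble it from exactly the same pieces---Lemma \ref{squareroot} for the CRT reduction, Proposition \ref{nonpower} for the fibre product, and the four subsections for the explicit case-by-case analysis, including the norm-relation trick $(C^2-\Phi D^2)^N=1$ that resolves the consistency of $(a'')$ and $(a''')$ in the esoteric case. The only thing you make more explicit than the paper is the observation that the CRT summands are characteristic in $G$ and hence $\tau$-stable, which is a welcome clarification.
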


\section{Biregular structure of the coverings}

 The main purpose of this section is to use the description of  Abelian coverings through line bundles (invertible sheaves)
and effective (branch) divisors $D_g$.
  If $D$ is an effective divisor, then $\s (D)$ is also an effective divisor.

To see this, consider that, if $(X, \s)$ is a real variety, then  $\tau$ acts on $\CC(X)$, in particular it acts
on  the group of Cartier divisors  $ H^0(X, \CC(X)^* / \hol_X^*)$. 

If a collection $(U_{\alpha}, f_{\alpha}),$ (here $ \ f_{\alpha} \in \CC(X)^*$) defines 
a Cartier divisor $D$,  then the conjugate Cartier divisor is given by the collection $(\s(U_{\alpha}), \tau (f_{\alpha})).$ 

An invertible sheaf $\sL = \hol_X(D)$ has the property that its space of sections $H^0(X, \sL)$
is the space 
$$ \{ \phi \in \CC(X) | \phi f_{\alpha} = : \phi_{\alpha} \in \hol_X (U_{\alpha}) \} ,$$
hence $$\tau : H^0(X, \sL) \ra H^0(X, \tau (\sL)) = H^0(X, \hol_X( \s D)).$$

Given a cyclic covering of real varieties $X \ra X'$, we can replace $X'$ by a smooth real model $Y$, and replace $X$ 
by the normalization of the fibre product of the cyclic covering with the resolution $ Y \ra X'$, and obtain that (as in \cite{cyclic},
whose notation we shall now adopt)

\begin{enumerate}
\item
$Y = X / G$, $G \cong \ZZ/n$
\item
$X$ is normal, real, 
\item
$Y$ is real smooth, with $\s ' $ induced by $\s$;
\item
$ f : X \ra Y$ is finite and flat.

\end{enumerate}

More generally, we can  relax assumption (3) and assume that  $X, Y$ are  normal  real  projective varieties,
and $Y$ is moreover factorial.

 By flatness we have a decomposition 

$$ f_* ( \hol_X) = \hol_Y \oplus (\bigoplus_{\chi \in G^{\star}
\setminus \{0\}} \hol_Y (- L_{\chi})),$$
with a notation that we now explain.

 $\CC(X)$ is a cyclic  Galois extension of $\CC (Y)$, and we
denote now by $$G \cong \mu_n : = \{
\zeta \in \CC | \zeta^n = 1\}$$ its Galois group,   by $G^\star $ the
group of characters: $G^\star := Hom(G, \CC^\star)$, and we observe that  $G^\star  \cong \ZZ/n$, where 
the isomorphism $G^\star \overset{\cong}{\longrightarrow}\ZZ/n$ associates  to $\chi \in G^\star$
the residue class  $j\in \ZZ/n$ such that  $\chi(\zeta)=\zeta^j$, for all $\zeta \in  \mu_n$.

As  in \cite{cyclic} we observe that for each character $\chi$ of order  $n$ the extension is given by
$$\CC(X) =  \CC (Y)(w) , w^n = F(y) \in \CC (Y), $$ where $w$ is a
$\chi$-eigenvector.

Since $Y$ is  factorial,  $F$ admits a unique prime
factorization as a fraction of pairwise
relatively prime sections of line bundles, and we can write
$$  w^n = \frac{\prod_i \sigma_i^{n_i} } {\prod_j \tau_j^{m_j} } ,$$
 where the sections $\s_i, \tau_j$ are prime.

Writing  $$ n_i =  N_i + n n'_i , \ \  m_j = - M_j + n m'_j $$ with
$0 \leq N_i, M_j \leq n-1$ and setting
$$ z : = w \cdot  \prod_i \sigma_i^{- n'_i} \prod_j \tau_j^{m'_j},$$

we get 
a rational section $z$   of a line bundle on $Y$ and we have

$$   z^n = \prod_i \sigma_i^{N_i} \prod_j \tau_j^{M_j}  . $$

Put together  the prime factors which appear with the same
exponent,  obtaining:
$$   z^n = \prod_{i=1}^{n-1} \delta_i^{i}  . $$

Here each factor $\delta_j$ is reduced, but not irreducible, and
corresponds to a Cartier divisor that we
shall denote $D_j$. Calculating the  local monodromy around $D_j$ is easily seen that
 $D_j$ is exactly the divisorial part of the branch
locus $ D : =  \sum_j  D_j $ where the local
monodromy is the $j$-th power of the standard generator
$\ga : = e^{ \frac{2 \pi   \sqrt{-1} }{n}} $ of $G \cong \mu_n$.

We  write characters additively, in the sense
that we view them as
$ G^*
= Hom (G, \ZZ / n)$. 

To
$\chi$ we associate  the normal covering
$$ Z_{\chi} : = X / ker (\chi). $$

A similar argument   (see for instance \cite{cyclic} page 285, section 1)  shows now that we have  a linear equivalence
   $$ (*)  \ n L _{\chi} \equiv \sum_i  [ \chi(i)] D_i $$
   where $[ r]$, for $r \in \ZZ / n$, is the unique residue class
   in $ \{ 0,1, \dots , n-1\}$.

   We observe for further use the following formula:
   $$  (I) \ \   [ \chi(i)]  +  [ \chi'(i)] =
[(\chi + \chi')(i)] + \epsilon^i_{\chi, \chi'}
n,$$
   which defines the numbers  $ \epsilon^i_{\chi, \chi'}  \in \{0,1\}$.

The following theorem is in part a special case of the structure
theorem for Abelian coverings
   due to  Pardini (\cite{pardini}, see also
\cite{comessatti}).

\begin{thm}\label{cyclic} i) Given a factorial variety $Y$, the
datum of a pair $(X, \gamma )$ where $X$ is
a normal variety and $\ga $ is an automorphism of order $n$ such
that, $G$ being the subgroup generated
by $\ga$, one has  $X / G \cong Y$, is equivalent to the datum of
reduced effective divisors $D_1, \dots , 
D_{n-1}$ without common components, and of a divisor class $L$ such
that we have the following linear
equivalence
$$ (*)  \ n L \equiv \sum_i i D_i $$ and moreover , setting $ h : =
G.C.D. \{ i | D_i \neq 0\} $, either

(**) $ h=1$ or  the divisor class
$$ (***)  \ L' : = \frac{n}{h} L -  \sum_i \frac{i}{h}  D_i $$ has
order precisely $h$.

ii) If $\LL$ is the geometric line bundle whose sheaf of
regular sections is $\hol_Y(L)$, then
$X$ is the normalization of the singular covering
$$ X' \subset  \LL, X' : = \{ (y,z)|  z^n = \prod_{i=1}^{n-1}
\delta_i^{i}  \}. $$ And $\ga$ acts by $ z
\mapsto e^{ \frac{2 \pi   \sqrt{-1} }{n}} z $.

iii) The scheme structure of $X$ is explicitly given as
$$ X : = Spec ( \hol_Y \oplus (\bigoplus_{\chi \in G^{\star}
\setminus \{0\}} \hol_Y (- L_{\chi}) ))$$ where
the divisor classes $L_{\chi}$ are recursively determined by $L_1 : =
L$, and by $ L_{\chi + \xi} \equiv
L_{\chi} + L_{\xi} - \sum_i  \epsilon^i_{\chi, \xi} D_i$.

Finally  the ring structure is given by  the multiplication maps
$$\hol_Y (- L_{\chi}) \times \hol_Y (-
L_{\xi}) \ra \hol_Y (- L_{\chi+ \xi})$$ determined by the section
$$\prod_i \delta_i^{ \epsilon^i_{\chi,
\xi}} \in  H^0 ( \hol_Y (- L_{\chi+ \xi} + L_{\chi} + L_{\xi})).$$

\end{thm}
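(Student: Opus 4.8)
The plan is to specialise the general structure theorem for Abelian coverings of \cite{pardini} to the cyclic group $G\cong\mu_n$, supplying the explicit cyclic normalisation already carried out in the paragraphs preceding the statement (in the spirit of \cite{cyclic}). Everywhere I use that $Y$ is normal and factorial, so that a rank-one reflexive $\hol_Y$-module is automatically invertible. The backbone is the eigensheaf decomposition: since $f:X\to Y$ is finite flat and $G$ acts on the sheaf of algebras $f_*\hol_X$, the latter is $G^\star$-graded, $f_*\hol_X=\bigoplus_{\chi\in G^\star}\hol_Y(-L_\chi)$ with $L_0=0$, each summand being reflexive of rank one hence a line bundle. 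Multiplication respects the grading, so for each pair $\chi,\xi$ it yields an $\hol_Y$-linear map $\hol_Y(-L_\chi)\otimes\hol_Y(-L_\xi)\to\hol_Y(-L_{\chi+\xi})$, i.e. a section of $\hol_Y(L_\chi+L_\xi-L_{\chi+\xi})$; describing the classes $L_\chi$ and these sections is exactly the content of (ii) and (iii).

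Next I would run the normalisation procedure already set up above. Fixing a character of order $n$ and a generating eigenvector $w$ with $w^n=F\in\CC(Y)$, the substitution $z:=w\prod_i\sigma_i^{-n_i'}\prod_j\tau_j^{m_j'}$ turns this into $z^n=\prod_{i=1}^{n-1}\delta_i^{i}$, where each $\delta_i$ collects the prime sections whose exponent has residue $i\bmod n$. This simultaneously produces reduced divisors $D_1,\dots,D_{n-1}$ without common components, identifies $L=L_1$, and gives $(*)$ in the form $nL\equiv\sum_i iD_i$; applying the same normalisation to the eigenvector $z^{j}$ generating the $\chi$-summand ($j\in\ZZ/n$ corresponding to $\chi$) gives $z_\chi^n=\prod_i\delta_i^{[\chi(i)]}$, hence $nL_\chi\equiv\sum_i[\chi(i)]D_i$. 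Part (ii) is then the remark that $X$ is the normalisation of the hypersurface $\{z^n=\prod_i\delta_i^{i}\}$ in the total space of $\LL$, with $\gamma$ acting by $z\mapsto e^{2\pi\sqrt{-1}/n}z$. For (iii), comparing the normalised eigenvectors $z_\chi,z_\xi,z_{\chi+\xi}$ and tracking the overflow $z^n=\prod_i\delta_i^{i}$ that appears when $[\chi(i)]+[\xi(i)]\ge n$, the three instances of $(*)$ combined with $(I)$ give $n(L_\chi+L_\xi-L_{\chi+\xi})\equiv n\sum_i\epsilon^i_{\chi,\xi}D_i$, while the honest multiplication sends the product of generators to $\big(\prod_i\delta_i^{\epsilon^i_{\chi,\xi}}\big)\,z_{\chi+\xi}$, whose section has divisor $\sum_i\epsilon^i_{\chi,\xi}D_i$; this \emph{pins down} the recursion $L_{\chi+\xi}\equiv L_\chi+L_\xi-\sum_i\epsilon^i_{\chi,\xi}D_i$ and the stated multiplication maps, consistency being a routine cocycle check using $(I)$.

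It remains to prove the equivalence of data in (i). The forward direction is the construction just described. For the converse I would start from $(D_i,L)$ satisfying $(*)$, define the $L_\chi$ by the recursion, build the $\hol_Y$-algebra $\hol_Y\oplus\bigoplus_{\chi\neq 0}\hol_Y(-L_\chi)$ with multiplication given by the sections $\prod_i\delta_i^{\epsilon^i_{\chi,\xi}}$, and set $X:=\mathrm{Spec}$ of it with the tautological $\gamma$-action. The $h$-condition is precisely the connectedness criterion: since the local monodromy along $D_i$ is $\gamma^i$ and $D_i=0$ unless $h\mid i$, the inertia subgroup is $\langle\gamma^{h}\rangle$, so $Z:=X/\langle\gamma^{h}\rangle\to Y$ is unramified in codimension one of degree $h$, classified by the torsion class $L'$, which satisfies $hL'\equiv 0$ by $(*)$; the covering $X$ is irreducible with $\gamma$ of exact order $n$ if and only if $L'$ has order exactly $h$, which is $(***)$ (with $h=1$ meaning no étale part). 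This is the geometric translation of Proposition \ref{nonpower} and Remark \ref{fact}.

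The main obstacle I expect is the normality in the converse direction: one must show that the explicitly constructed algebra is integrally closed, equivalently that it coincides with the normalisation of the singular covering in (ii). This is exactly where the hypothesis that the $D_i$ are reduced and without common components is used, through a local analysis along each prime component of the branch divisor (alternatively one quotes the corresponding step of \cite{pardini}). The secondary delicate point is the torsion bookkeeping guaranteeing that the recursion for $L_\chi$ is well defined and that the order-$h$ condition faithfully records connectedness of $X$.
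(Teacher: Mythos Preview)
Your proposal is correct and follows exactly the approach of the paper: the paper does not give a formal proof of this theorem but instead states it as ``in part a special case of the structure theorem for Abelian coverings due to Pardini (\cite{pardini}, see also \cite{comessatti})'', with the forward direction (eigensheaf decomposition, normalisation to $z^n=\prod_i\delta_i^{i}$, the linear equivalences $(*)$, and formula $(I)$) already worked out in the paragraphs immediately preceding the statement. Your write-up makes explicit the converse construction and the connectedness criterion via $L'$, which the paper leaves implicit in the citations, but the strategy is identical.
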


\bigskip

\bigskip

At this stage we want to consider the extra data coming from the real structure.

It is convenient to view  $X$ embedded in $\bigoplus_{\chi \in
G^{\star} \setminus \{0\}} \LL_{\chi}$ and to 
write  the ring structure via the fundamental equation 
$$ (****) \ \  z_{\chi} \cdot z_{\xi}   =  z_{\chi+ \xi} \prod_i \delta_i^{
\epsilon^i_{\chi, \xi}},$$ 

where $z_{\chi} $ is
 a fibre variable on the geometric line bundle
$\LL_{\chi}$ (it is the natural section on $\LL_{\chi}$ of the pull back of  $ \hol_Y (- L_{\chi})$) 
and where we use the convention that  $z_0 =1$.

The action of $\tau$ on $\CC(X)$, hence also on the subfield $\CC(Y)$,  induces, by the description
that we recalled above of the building data of the cover,  an action of $\tau$ 
on the building data $L_{\chi}$, $D_i$, $z_\chi$, $\de_i$.

For simplicity of calculations, we write from now on the characters  as elements  
of $\ZZ/n$.

From the relations $\tau \gamma \tau = \gamma^m$, and $ \gamma z_j = \zeta^j z_j$, we obtain that
$$  \gamma ( \tau (z_j ) ) = \tau (\gamma ^m (z_j))=  \zeta ^{-mj} \tau ( z_j),$$
hence $\tau (z_j )$ is an eigenvector for $\zeta ^{-mj}$, and $\tau$  maps
$$ L_j \mapsto L_{-mj}.$$

Observe here that $ j = - m j $ if and only if $ j (m+1) = 0 \in \ZZ/n$; in particular, if $m=-1$,
all the line bundles are real and  linearized. The line bundles corresponding to $\{ j | j (m+1) \neq  0 \in \ZZ/n\}$
come in pairs, which are exchanged by $\tau$.

We look now at the branch divisors $D_i$. It is clear that the antiholomorphic
map $\s'$ carries the branch locus to itself, but we show now how the divisors $D_i$ are permuted.

We know that the local monodromy around $D_i$ is given by $\ga^i$, and we take local coordinates $(y, \de_i)$
($ y = y_1, \dots, y_{d-1})$, where $d = dim (Y)$, at the general point of $D_i$. Then, in appropriate local coordinates,
$\s'$ is given by $(\overline{y}, \overline{\de_i})$, and since $\s g^i \s = g^{mi}$, and the orientation in the normal bundle 
is reversed by $\s'$, it follows that 
$$ \s' (D_i) = D_{-mi},$$
in particular  
$\tau (\de_i ) =  \de_{-mi}$ and $D_i$ is real if and only if $i = - mi$.

Applying now $\tau$ to the fundamental equation (****) we obtain, since  $\tau (z_j) = z_{- mj}$, that 
the real structure extends to the covering if and only if 

$$    z_{-mj} \cdot z_{-mh}    =  z_{- m (j + h) } \prod_k \delta_{-mk}^{
\epsilon^k_{j, h }},$$ 
and since 
$$    z_{-mj} \cdot z_{-mh}    =  z_{- m (j + h) } \prod_i \delta_{i}^{
\epsilon^i_{-mj, -mh }},$$
this is equivalent to requiring that
$$ \prod_i \delta_i^{
\epsilon^i_{- mj, - m h}} = \prod_i \delta_{i}^{
\epsilon^{-mi}_{j, h}},$$
a property which clearly holds true by virtue of (I) (just observe that $ -m j (i) = j (-mi) , -m h (i) = h (-m i)$).

\medskip

It is now easy to derive the real version of the biregular theory of cyclic coverings.

\begin{theo}\label{real-cyclic} i) Given a real factorial variety $(Y, \tau_Y)$, the
datum of a real finite cyclic covering $(X, \tau_X) \ra (Y, \tau_Y)$,
where $X$ is
a normal variety and the  group $G \cong \ZZ/n$  with $X/G  \cong Y$ is generated  by an automorphism
$\ga $ of order $n$, 
 is equivalent to:
 \begin{enumerate}
 \item
 the datum of an element $m \in \ZZ/n$ with $m^2=1$,
  \item
  the datum of
reduced effective divisors $D_1 = div (\de_1), \dots , D_{n-1} = div(\de_{n-1})$ without common components, and such that 
$$
\tau_Y (\de_i) = \de_{-mi}, \quad 
{\rm for}\quad i=1, \ldots , n-1,
$$
\item
and the datum of a divisor class $L $ such
that we have the following linear
equivalence
$$ (*)  \ n L \equiv \sum_i i D_i .$$
The above data should satisfy the following conditions:

\begin{itemize}
\item[(I)]  setting $ h : = G.C.D. \{ i | D_i \neq 0\} $, either (**) $ h=1$ or, setting $ n = hd$, the divisor class
$$ (***)  \ L' : = \frac{n}{h} L -  \sum_i \frac{i}{h}  D_i $$ has
order precisely $h$ (this condition guarantees that $Y$ is an  irreducible  variety, otherwise it is just a normal  scheme);
\item[(II)]
defining  divisor classes $L_{\chi}$  recursively  by $L_0 = \hol_Y, L_1 : =
L$, and by $ L_{\chi + \xi} \equiv
L_{\chi} + L_{\xi}  -  \sum_i  \epsilon^i_{\chi, \xi} D_i$,  
  there are  choices of  tautological sections
$z_j $ on $\LL_j$ of the pull back of  $ \hol_Y (- L_j)$ such that 
$$ \tau_Y ( z_j) = z_{-mj},$$
in particular $\tau (L_j) \cong L_{-mj}$. 
\end{itemize}
\end{enumerate}

ii) In the situation of i)
$X$ is the normalization of the singular covering
$$ X' \subset  \LL, X' : = \{ (y,z)|  z^n = \prod_{i=1}^{n-1}
\delta_i^{i}  \}, $$ 

and $\ga$ acts by $ z
\mapsto e^{ \frac{2 \pi   \sqrt{-1} }{n}} z $.

iii)  The scheme structure of $X$ is explicitly given as
$$ X : = Spec ( \hol_Y \oplus (\bigoplus_{\chi \in G^{\star}
\setminus \{0\}} \hol_Y (- L_{\chi}) )),$$
that is, $X$ is embedded in $\bigoplus_{\chi \in
G^{\star} \setminus \{0\}} \LL_{\chi}$ and defined  
by the fundamental equations 
$$ 
(****) \ \  z_{\chi} \cdot z_{\xi}   =  z_{\chi+ \xi} \prod_i \delta_i^{\epsilon^i_{\chi, \xi}}. 
$$ 
 
iv) The real structure $\tau_X$ on $X$ is defined by extending $\tau_Y$ from  $\hol_Y$ to $\hol_X$
 via the action of $\tau_Y$ on the $z_j, \de_i$'s (clearly then $\tau g \tau = g^m$ for all $g \in G$).
\end{theo}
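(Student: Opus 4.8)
The plan is to deduce the statement from the complex structure theory of Theorem \ref{cyclic} by superimposing the real structure. Parts (ii) and (iii) are nothing but Theorem \ref{cyclic} applied to the underlying complex cyclic covering $X\to Y$, so the genuinely new content is the equivalence, in parts (i) and (iv), between the datum of a compatible antiholomorphic involution $\tau_X$ on $X$ lifting $\tau_Y$ and normalizing $G=\langle\gamma\rangle$, and the extra real data: the element $m$ in (1), the symmetry $\tau_Y(\delta_i)=\delta_{-mi}$ in (2), and the existence of compatibly normalized tautological sections in condition (II). I would set up a bijection between these two packages of data and check it in both directions.

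In the forward direction I start from a real covering $(X,\tau_X)\to(Y,\tau_Y)$. Since $\tau_X$ normalizes $G$ it determines $m\in\ZZ/n$ by $\tau_X\gamma\tau_X=\gamma^m$; conjugating twice and using $\tau_X^2=\mathrm{id}$ gives $\gamma^{m^2}=\gamma$, hence $m^2=1$, which is (1). Feeding $X\to Y$ into Theorem \ref{cyclic} produces the building data $D_i=\divi(\delta_i)$, the class $L$, the classes $L_\chi$ and the tautological sections $z_j$ subject to (*) and (I). Now the computations carried out in the discussion preceding the theorem apply: from $\gamma z_j=\zeta^j z_j$ and $\tau_X\gamma\tau_X=\gamma^m$ one gets $\gamma(\tau_X(z_j))=\zeta^{-mj}\,\tau_X(z_j)$, so $\tau_X$ maps the $\zeta^j$-eigenspace onto the $\zeta^{-mj}$-eigenspace and hence $L_j\mapsto L_{-mj}$; writing $\tau_X(z_j)=A_j z_{-mj}$ with $A_j\in\CC(Y)^\star$, the relation $\tau_X^2=\mathrm{id}$ and the fundamental equations (****) force the $A_j$ to be a coboundary, so after rescaling the $z_j$ one arranges $\tau_Y(z_j)=z_{-mj}$, which is condition (II). Analysing the local monodromy around $D_i$ shows that $D_i$ is carried to $D_{-mi}$, i.e. $\tau_Y(\delta_i)=\delta_{-mi}$, which is (2). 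Thus a real covering yields exactly the asserted data.

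Conversely, given the data (1)--(3) satisfying (I) and (II), Theorem \ref{cyclic} first produces the complex covering $X$ with its $G$-action, establishing (ii) and (iii). I then define $\tau_X$ on $\hol_X=\hol_Y\oplus\bigoplus_{\chi\ne 0}\hol_Y(-L_\chi)$ by extending $\tau_Y$ through $z_j\mapsto z_{-mj}$ (using (II)) and $\delta_i\mapsto\delta_{-mi}$ (using (2)), as prescribed in (iv). The crucial point is that this conjugate-linear map respects the ring structure, i.e. is consistent with every fundamental equation (****): applying the prescription to $z_j z_h=z_{j+h}\prod_i\delta_i^{\epsilon^i_{j,h}}$ yields $z_{-mj}z_{-mh}=z_{-m(j+h)}\prod_i\delta_i^{\epsilon^{-mi}_{j,h}}$, which must coincide with the genuine relation $z_{-mj}z_{-mh}=z_{-m(j+h)}\prod_i\delta_i^{\epsilon^i_{-mj,-mh}}$. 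These agree precisely because $\epsilon^i_{-mj,-mh}=\epsilon^{-mi}_{j,h}$, an immediate consequence of the defining carry relation $[\chi(i)]+[\chi'(i)]=[(\chi+\chi')(i)]+\epsilon^i_{\chi,\chi'}\,n$ together with $(-mj)(i)=j(-mi)$ and $(-mh)(i)=h(-mi)$. It then remains to verify the involution properties: $\tau_X$ is antiholomorphic because $\tau_Y$ is and conjugation on $\CC$ is used on the fibre coordinates, while the short computation $\tau_X\gamma\tau_X(z_j)=\overline{\zeta^{-mj}}\,z_{m^2 j}=\zeta^{mj}z_j=\gamma^m(z_j)$ gives both $\tau_X\gamma\tau_X=\gamma^m$ and, via $m^2=1$, the relation $\tau_X^2=\mathrm{id}$.

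The main obstacle is the ring-compatibility of $\tau_X$ in the converse direction, namely that the single prescription $z_j\mapsto z_{-mj}$, $\delta_i\mapsto\delta_{-mi}$ is simultaneously consistent with all the multiplication maps (****); this is exactly what the carry identity $\epsilon^i_{-mj,-mh}=\epsilon^{-mi}_{j,h}$ resolves, and it was already isolated in the discussion preceding the theorem. A secondary, more structural subtlety lives in the forward direction: one must show that the tautological sections can be rescaled \emph{compatibly with the whole ring structure} so as to realize $\tau_Y(z_j)=z_{-mj}$. This is precisely the reality descent condition recorded by (II), and once it is granted the remaining verifications are routine.
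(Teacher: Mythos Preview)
Your proposal is correct and follows essentially the same route as the paper. The paper's proof is the discussion immediately preceding the theorem: it derives $\tau(L_j)\cong L_{-mj}$ from the eigenvector computation, obtains $\tau_Y(\delta_i)=\delta_{-mi}$ from the local monodromy, and checks ring compatibility via the carry identity $\epsilon^i_{-mj,-mh}=\epsilon^{-mi}_{j,h}$; you reproduce exactly these steps, only adding the explicit verifications that $m^2=1$, that $\tau_X^2=\mathrm{id}$, and that $\tau_X\gamma\tau_X=\gamma^m$ in the converse direction.

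The one place you go slightly beyond the paper is your coboundary claim in the forward direction, asserting that the scaling factors $A_j$ in $\tau_X(z_j)=A_j z_{-mj}$ can always be absorbed. The paper does not spell this out either; it simply records condition (II) as part of the data and notes in Remark \ref{reality}(b) that it suffices to arrange it for $j=1$, whence the rest follows inductively from the fundamental equations and the carry identity (exactly the mechanism you invoke). Your later hedge, calling this ``the reality descent condition recorded by (II)'', is the accurate reading: in the forward direction the existence of such a normalization is what condition (II) encodes, and it reduces to a single Hilbert~90--type equation for $z_1$. So your sketch and the paper's treatment are at the same level of detail here; neither carries out the rescaling in full, but both correctly isolate where the content lies.
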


\begin{rem}\label{reality}
(a) Since $Y$ is a complete variety,  condition (I) 
in Theorem \ref{real-cyclic}, ensuring connectedness of the covering,
may be replaced by $H^0 (  \hol_Y (- L_{\chi}) )= 0$
for all $\chi \neq 0$.

(b) Condition (II) in Theorem \ref{real-cyclic} 
holds if it holds for $j=1$,  as it follows inductively by  the linear 
equivalences (II).

In order to spell out concretely 
condition (II) for $j=1$, $\tau (L_1) \cong L_{-m}$, let us set $\mu = [ -m]$,
and use the recursive definition of the $L_\chi$'s, 
for convenience working in the Picard group of $Y$.

 Then 
$$ L_{-m} = L_{\mu} =  \mu L_1 - \sum_i (\epsilon^i_{1,1} + \epsilon^i_{2,1} + \dots + \epsilon^i_{\mu - 1,1} ) D_i.$$

 Now, because of the equation 
$$ n  L_{\mu} =  \mu \sum_i i D_i  -  \sum_i n (\epsilon^i_{1,1} + \epsilon^i_{2,1} + \dots + \epsilon^i_{\mu - 1,1} ) D_i = \sum [\mu i] D_i,$$

$$\eta : = \tau (L_1) -  L_{\mu} $$ 
satisfies $$ n \eta =  \tau (\sum_i i D_i) - \sum_i  [\mu i] D_i = \sum_i (i  - [ i] ) D_{-mi} = 0,$$

hence $\eta$ 
 is  an  $n$-torsion divisor (a torsion divisor of order dividing $n$). If $\eta \neq 0$, one can still alter the choice of $L_1$  keeping the divisors $D_i$ fixed, in view of (3),
adding another $n$-torsion divisor, call it $\la$.

We can then satisfy condition (II) for $j=1$ if we find $\la$ which solves the equation 
$$ \eta = \tau (\la) - \mu \la.$$

For $m=-1$, $\mu=1$, where condition (II)
for $j=1$ simply means that $L_1$ should be real,
 we want to find $\la$  solving:
$$ \eta : = \tau (L_1) -  L_1 = \tau (\la) -  \la.$$

\end{rem}

\begin{rem}
A quite analogous theorem holds, mutatis mutandis,   for real Abelian coverings, describing real  normal (not necessarily connected) schemes.

These are the changes to be done:

\begin{itemize}
\item
$\ZZ/n$ is replaced by a finite Abelian group $G$.
\item
in (1), $m$ is replaced by $ M \in Aut(G)$ such that $M^2=1$.
\item
in (2), we choose divisors $D_g$ for $ g \in G, \ g \neq 0$.
\item
(3) and (I) disappear, while (II) 
is replaced by : the character sheaves $ L_{\chi}$ 
must satisfy $$ L_{\chi + \xi} \equiv
L_{\chi} + L_{\xi} - \sum_i  \epsilon^i_{\chi, \xi} D_i$$ (see \cite{pardini})
\item
in (II) we replace $z_j$ by $z_{\chi}$, $L_j$ by $ L_{\chi}$ , and so on..
\item
ii)  disappears, iii) and iv) are identical.
\item
If $Y$ is complete, then connectedness of $X$ is verified by imposing that the global sections of $\hol_X$
are just the constants.
\end{itemize}

\end{rem}

\bigskip

\end{document}